\documentclass[a4paper]{article}

\usepackage[dvips]{graphics,graphicx}
\usepackage{amsfonts,amssymb,amsmath,color,mathrsfs, amstext}
\usepackage{amsbsy, amsopn, amscd, amsxtra, amsthm,authblk,enumerate,algorithmicx,algorithm}
\usepackage{bm,bbm}
\usepackage{algpseudocode}
\usepackage{upref}
\usepackage{geometry}
\usepackage{ulem}
\geometry{left=3.5cm,right=3.5cm,top=3cm,bottom=3cm}
\usepackage[displaymath]{lineno}
\usepackage{float}
\usepackage{yhmath}
\usepackage[colorlinks,
            linkcolor=red,
            anchorcolor=red,
            citecolor=red
            ]{hyperref}

\numberwithin{equation}{section}
\graphicspath{fig}
\newtheorem{thm}{Theorem}[section]

\newtheorem{prop}[thm]{Proposition}

\newtheorem{exmp}[thm]{Example}

\numberwithin{equation}{section}

\DeclareMathOperator{\erfc}{erfc}

\newcommand{\abs}[1]{\left\vert#1\right\vert}
\newcommand{\eps}{\varepsilon}
\newcommand{\dint}{\displaystyle\int}

\begin{document}

\title{\bf{Random batch particle methods for the homogeneous Landau equation}}

\author[1]{Jos\'{e} Antonio Carrillo \thanks{carrillo@maths.ox.ac.uk}}
\author[2]{Shi Jin \thanks{shijin-m@sjtu.edu.cn}}
\author[3]{Yijia Tang \thanks{yijia\underline{~}tang@sjtu.edu.cn}}
\affil[1]{Mathematical Institute, University of Oxford, Oxford, OX2 6GG, UK.}
\affil[2]{School of Mathematical Sciences, Institute of Natural Sciences, MOE-LSC, Shanghai Jiao Tong University, Shanghai, 200240, P. R. China.}
\affil[3]{School of Mathematical Sciences, Shanghai Jiao Tong University, Shanghai, 200240, P. R. China.}

\date{}
\maketitle

\begin{abstract}
We consider in this paper random batch particle methods for efficiently solving the homogeneous Landau equation in plasma physics. The methods are stochastic variations of the particle methods proposed by Carrillo et al. [J. Comput. Phys.: X 7: 100066, 2020] using the random batch strategy. The collisions only take place inside the small but randomly selected batches so that the computational cost is reduced to $O(N)$ per time step. Meanwhile, our methods can preserve the conservation of mass, momentum, energy and the decay of entropy. Several numerical examples are performed to validate our methods.
\end{abstract}

{\bf Keywords.}
	Homogeneous Landau equation; Random batch particle method; Coulomb collision;

\section{Introduction}
\label{sec: intro}

The Fokker-Planck-Landau equation, originally derived by Landau \cite{Landau1936}, is a fundamental integro-differential equation describing the evolution of the distribution for charged particles in plasma physics \cite{LifshitzPitaevskii}.  It models the binary collisions between charged particles with long-range Coulomb interaction, which is the grazing limit of the Boltzmann equation \cite{DegondDesreux92,Desvillettes92,Villani98}. Denote by $f(t,x,v)$ the mass distribution of charged particles at time $t$, position $x$ with velocity $v$, the Fokker-Planck-Landau equation reads
\begin{equation}
\partial_t f +v\cdot \nabla_x f+E\cdot\nabla_v f = Q(f,f):=\nabla_v \cdot \left(\int_{\mathbb{R}^d}A(v-v_*)\left(f(v_*)\nabla_v f(v)-f(v)\nabla_{v_*}f(v_*)\right)\mathrm{d}v_*\right),
\label{eqn: FPL}
\end{equation}
where $E$ is either an external force or a self-consistent force. The collision kernel
$$
A(z)=\Lambda|z|^{\gamma}(|z|^2I_d - z\otimes z), \quad -d-1\leq \gamma\leq 1,\quad \Lambda> 0, \quad d\geq 2,
$$
is symmetric positive semi-definite, $A(z)=A(-z), \ker A(z)=\mathbb{R}z$. Similar to the Boltzmann equation, $\gamma>0$, $\gamma=0$, $\gamma<0$ represents the hard potential, Maxwell molecules and the soft potential case respectively. The Coulomb potential where $d=3, \gamma=-3$ is of great significance since it is relevant in physical plasma applications \cite{DegondDesreux92}. 

In the numerical aspect of equation \eqref{eqn: FPL}, the approximation of the nonlocal quadratic Landau collision operator $Q(f,f)$ is a major difficulty. In this paper, we only focus on the spatially homogeneous Landau equation
\begin{equation}
	\partial_t f= Q(f,f).
	\label{eqn: homogeneous Landau}
\end{equation} 
It is well-known that equation \eqref{eqn: homogeneous Landau} has conservation of mass, momentum and energy since $\int_{\mathbb{R}^d}Q(f,f)(1,v,|v|^2)\mathrm{d}v=\bm{0}$. The Boltzmann entropy $$E(f)=\int_{\mathbb{R}^d}f\log f \mathrm{d}v$$ is dissipated through
$$
	\frac{\mathrm{d}E}{\mathrm{d}t}=-D=-\frac{1}{2}\iint_{\mathbb{R}^{2d}}B_{v,v_*}\cdot A(v-v_*)B_{v,v_*}ff_*\mathrm{d}v\mathrm{d}v_*\leq 0.
	$$ 
Here, 
$$
B_{v,v_*}=\nabla_v \frac{\delta E}{\delta f} - \nabla_{v_*} \frac{\delta E_*}{\delta f_*},
\quad \frac{\delta E}{\delta f}=\log f+1,
$$ 
and $f_*=f(v_*), E_*=E(f_*)$ for short.
Moreover, $f$ is the equilibrium of \eqref{eqn: homogeneous Landau} if and only if $f$ is given by the local Maxwellian
\begin{equation}
	\mathcal{M}_{\rho,u,T}=\frac{\rho}{(2\pi T)^{d/2}}\exp \left(-\frac{|v-u|^2}{2T}\right)
	\label{eqn: Maxwellian}
\end{equation}
with conserved quantities 
$$
\rho=\int_{\mathbb{R}^{d}} f \mathrm{d}v,\quad 
\rho u=\int_{\mathbb{R}^{d}} v f \mathrm{d}v, \quad 
\rho u^2+\rho dT=\int_{\mathbb{R}^{d}} |v|^2f \mathrm{d}v.
$$
The well-posedness of the homogeneous Landau equation in the hard potential \cite{DesvillettesVillani2000,DesvillettesVillani2000b} and Maxwell molecule \cite{Villani1998} cases are well-understood resorting to the notion of $H-$solution proposed by Villani \cite{Villani98}. In the soft potential case, there are still lots of open questions. It has been partially resolved by probabilistic techniques \cite{FournierGuerin09} and entropy dissipation estimate \cite{Desvillettes15}.

Various numerical methods have been proposed for the homogeneous Landau equation. The entropy schemes \cite{DegondDesreux94,BuetCordier98} are widely used to provide a decay of numerical entropy. Thus, they can preserve the conserved quantities and the stationary states are discrete Maxwellians. And there are conservative implicit schemes \cite{LemouMieussens05} as well. To overcome the stiffness of the collision operator and capture the fluid dynamic limit, a class of asymptotic-preserving schemes \cite{JinYan11} were introduced. Furthermore, a stochastic Galerkin method was developed to deal with the Landau equation with uncertainties \cite{HuJinShu18}. 
However, how to solve equation \eqref{eqn: homogeneous Landau} efficiently still remains a major concern. The complexity of evaluating the quadratic collision operator $Q(f,f)$ is of $O(N^2)$ where $N$ is the number of discrete velocity points. Some fast algorithms are investigated for reducing the cost to $O(N\log N)$ including the multigrid algorithm \cite{BuetCordierDegondLemou97} and the one combined with fast multipole expansion \cite{Lemou98}. A Fourier spectral method is developed in \cite{PareschiRussoToscani00}, which is $O(N\log N)$ by using the fast Fourier transform thanks to the convolutional property. Recently, an Hermite spectral method \cite{LiWangWang20} is presented, where surrogate collision models are used to accelerate the computation. For the nonhomogeneous case, time splitting strategies are adopted, we refer readers to \cite{FilbetPareschi02,DimarcoLiPareschiYan15,ZhangGamba17,HuJinShu18,LiRenWang21}.

Particle methods have received profound development in the past several decades, see the review paper \cite{Chertock17} and the references therein. In the particle methods, the solution is approximated by the linear combination of Dirac delta-functions located at the particles. The particle locations and weights are evolved in time according to the ODE systems obtained from the weak formulation of the target equation. For diffusive-type equations, it seems that the existing particle methods do not keep the gradient flow structure of the equation except for the porous medium equation \cite{LionsGallic01}. In order to make sense of the entropy functional and maintain the gradient flow structure of the homogeneous Landau equation at the particle level, Carrillo et al. \cite{CarrilloHuWangWu20} provided two kinds of regularization of the entropy functional following \cite{CarrilloCraigPatacchini19}. Hence, the deterministic particle methods can preserve the basic properties of the Landau equation. Carrillo et al. \cite{CarrilloDelgadinoDesvillettesWu20} rigorously studied the gradient flow structure under a tailored metric inspired by the gradient flow structure to the Boltzmann equation \cite{Erbar}. The gradient flow structures of both equations are rigorously connected through the grazing collision limit via $\Gamma$-convergence of the gradient flows \cite{carrillo2021boltzmann}. 

Though having the good properties, the particle method in \cite{CarrilloHuWangWu20} is still $O(N^2)$. With the help of treecode summation, it can be reduced to $O(N\log N)$. Motivated by the random batch method (RBM) \cite{JinLiLiu20}, our objective in this work is to introduce a stochastic particle method for the homogeneous Landau equation with only $O(N)$ cost. RBM is devoted to simplify the pair-wise interactions between particles. Utilizing small but randomly selected--for each time step-- batch strategy, the interactions only occur inside the small batches so that the computational cost is reduced from $O(N^2)$ to $O(N)$ per time step. Using a random 
mini-batch was popular in machine learning, known as the stochastic gradient descent method \cite {SGD}. The random binary collision
idea was also proposed for mean-field equation of flocking and swarming \cite{AP2013, CPZ}. Due to the simplicity and scalability, it already has a variety of applications in solving mean-field Poisson-Nernst-Planck and Poisson-Boltzmann equations \cite{LiLiuTang21}, efficient sampling \cite{SVGDRBM,RBMC}, molecular dynamics \cite{RBE,LiangTanZhaoLiJinHongXu21} and so on, see \cite{JinLi21} for the recent review of RBM. 

In this paper, we propose a random batch particle method for the homogeneous Landau equation \eqref{eqn: homogeneous Landau}. It can be applied to both types of regularization of the entropy functional as introduced in \cite{CarrilloHuWangWu20}. The random batch idea is used to reduce the cost of binary collisions. In addition, taking advantage of the rapid decay property of the mollifier, we further
reduce the cost of approximation to the solution and the first variation of the regularized entropy. Hence, the final cost is $O(N)$ per time step. As we shall see, the random batch particle methods can retain the conserved quantities as well.

The rest of the paper is organized as follows. In section \ref{sec: Landau}, we briefly introduce two types of the regularized homogeneous Landau equation and the corresponding particle methods they induce. In section \ref{sec: RBM Landau}  the random batch particle methods are introduced and we also study their conservation and dissipating properties. Numerical examples are given in section \ref{sec: numer}, which validate the efficiency and accuracy of our random batch particle methods. The paper is concluded in section \ref{sec: conclusion}.

\section{The regularized homogeneous Landau equations and their particle approximations}
\label{sec: Landau}

\subsection{The regularized homogeneous Landau equations}

In this section, we give a brief review of the regularized homogeneous Landau equations in \cite{CarrilloHuWangWu20}. 
Rewrite equation \eqref{eqn: homogeneous Landau} as a nonlinear continuity equation
\begin{equation}
	\partial_t f = Q(f,f)=-\nabla_v \cdot (U(f)f),
	\label{eqn: Landau}
\end{equation}
with velocity field 
$$
U(f)=-\int_{\mathbb{R}^d} A(v-v_*)\left(\nabla_v \frac{\delta E}{\delta f} - \nabla_{v_*} \frac{\delta E_*}{\delta f_*}\right)f_*\mathrm{d}v_*.
$$
As mentioned in the introduction, one can easily obtain a formal gradient flow structure and generalize to the regularized equations using this form \cite{CarrilloDelgadinoDesvillettesWu20}.

Consider the mollifier
\begin{equation}
	\psi_{\epsilon}(v)=\frac{1}{(2\pi \epsilon)^{d/2}} \exp\left(-\frac{|v|^2}{2\epsilon}\right).
\label{eqn: mollifier}
\end{equation}
We can take into consideration two types of regularization. Define the regularized entropy as follows:
\begin{enumerate}[type I]
	\item  
\begin{equation}
	E_{\epsilon}(f)=\int_{\mathbb{R}^d} (f* \psi_{\epsilon})\log (f * \psi_{\epsilon}) \mathrm{d}v.
\label{eqn: regular entropy I}
\end{equation}

\item  
\begin{equation}
  \tilde{E}_{\epsilon}(f)=\int_{\mathbb{R}^d} f\log (f * \psi_{\epsilon}) \mathrm{d}v.
\label{eqn: regular entropy II}
\end{equation}
\end{enumerate}
By simple computation, one has \cite{CarrilloCraigPatacchini19}
$$
\frac{\delta E_{\epsilon}}{\delta f}
=\psi_{\eps}*(\log(f*\psi_{\epsilon})+1),
\quad \nabla\frac{\delta E_{\epsilon}}{\delta f}
=\nabla \psi_{\eps}*\log(f*\psi_{\epsilon}),$$
$$\frac{\delta \tilde{E}_{\epsilon}}{\delta f}=\log
(f*\psi_{\epsilon})+\left(\frac{f}{f*\psi_{\epsilon}}\right)*\psi_{\eps},
\quad \nabla\frac{\delta \tilde{E}_{\epsilon}}{\delta f}
=\frac{f*\nabla\psi_{\epsilon}}{f*\psi_{\epsilon}}+\left(\frac{f}{f*\psi_{\epsilon}}\right)*\nabla \psi_{\eps}.
$$

Then the regularized homogeneous Landau equation of type I is given by
\begin{equation}
	\partial_t f = Q_{\epsilon}(f,f)=-\nabla_v \cdot (U_{\epsilon}(f)f),
	\label{eqn: Landau regular I}
\end{equation}
where 
$$
U_{\epsilon}(f)=-\int_{\mathbb{R}^d} A(v-v_*)\left(\nabla_v \frac{\delta E_{\epsilon}}{\delta f} - \nabla_{v_*} \frac{\delta E_{\epsilon,*}}{\delta f_*}\right)f_*\mathrm{d}v_*.
$$
The regularized Landau equation \eqref{eqn: Landau regular I} satisfies the following properties \cite{CarrilloHuWangWu20}:
\begin{enumerate}
	\item Conservation of mass, momentum and energy:
	\begin{equation}
	\dfrac{\mathrm{d}}{\mathrm{d}t}\dint_{\mathbb{R}^d} 
	\begin{pmatrix}
		1\\
		v\\
		|v|^2
	\end{pmatrix}
	f \mathrm{d}v=\bm{0}.
	\label{eqn: conserved}
	\end{equation}
	\item Dissipation of entropy: let $B^{\epsilon}_{v,v_*}=\nabla_v \frac{\delta E_{\epsilon}}{\delta f} - \nabla_{v_*} \frac{\delta E_{\epsilon,*}}{\delta f_*}$, then
	\begin{equation}
	    \frac{\mathrm{d}E_{\epsilon}}{\mathrm{d}t}=-D_{\epsilon}=-\frac{1}{2}\iint_{\mathbb{R}^{2d}}B^{\epsilon}_{v,v_*}\cdot A(v-v_*)B^{\epsilon}_{v,v_*}ff_*\mathrm{d}v\mathrm{d}v_*\leq 0.
	    \label{eqn: dissipation}
	\end{equation}
	
	\item The stationary state of \eqref{eqn: Landau regular I} is also characterized by a Maxwellian.
\end{enumerate}

Similarly, the regularized homogeneous Landau equation of type II reads
\begin{equation}
	\partial_t f = \tilde{Q}_{\epsilon}(f,f)=-\nabla_v \cdot (\tilde{U}_{\epsilon}(f)f),
	\label{eqn: Landau regular II}
\end{equation}
with
$$
\tilde{U}_{\epsilon}(f)=-\int_{\mathbb{R}^d} A(v-v_*)\left(\nabla_v \frac{\delta \tilde{E}_{\epsilon}}{\delta f} - \nabla_{v_*} \frac{\delta \tilde{E}_{\epsilon,*}}{\delta f_*}\right)f_*\mathrm{d}v_*.
$$
Equation \eqref{eqn: Landau regular II} also shares the aforementioned conservation \eqref{eqn: conserved} and entropy dissipation \eqref{eqn: dissipation} with $E_{\epsilon}$ being replaced by $\tilde{E}_{\epsilon}$. 

\subsection{Deterministic particle methods}

In order to numerically approximate the Landau equation, Carrillo et al. \cite{CarrilloHuWangWu20} derived two deterministic particle methods based on the gradient flow structure of the regularized Landau equations \eqref{eqn: Landau regular I} and \eqref{eqn: Landau regular II}. The particle methods can preserve the basic conservation and entropy decay properties of the Landau operator $Q(f,f)$. Now, we briefly introduce these methods. 

\subsubsection{Type I method}

Approximate $f$ by the $N$ particle formulation 
\begin{equation}
 	f^N(t,v)=\sum_{i=1}^N w_i \delta (v-v_i(t)).
 	\label{eqn: empirical}
\end{equation} 
Here, $v_i(t)$ and $w_i$ are the velocity and weight of particle $i$, $N$ is the total number of particles. \eqref{eqn: empirical} is
the so-called empirical measure in distributional sense, this is why we need to regularize the entropy.
The corresponding discrete entropy of type I is
\begin{equation}
	\begin{aligned}
	E_{\epsilon}^N&=E_{\epsilon}(f^N)=\int_{\mathbb{R}^d} (f^N* \psi_{\epsilon})\log (f^N* \psi_{\epsilon}) \mathrm{d}v\\
	&=\int_{\mathbb{R}^d}\left(\sum_{i=1}^N w_i\psi_{\epsilon}(v-v_i)\right)\log\left(\sum_{k=1}^N w_k\psi_{\epsilon}(v-v_k)\right)\mathrm{d}v.
\end{aligned}
\label{eqn: entropy I}
\end{equation}
Plug \eqref{eqn: empirical} into \eqref{eqn: Landau regular I}, one can obtain the evolution for the particle velocity
\begin{equation}
	\frac{\mathrm{d}v_i(t)}{\mathrm{d}t}=U_{\epsilon}(f^N)(t,v_i(t))=-\sum_{j=1}^N w_jA(v_i-v_j)\left[\nabla \frac{\delta E_{\epsilon}^N}{\delta f}(v_i)- \nabla \frac{\delta E_{\epsilon}^N}{\delta f}(v_j)\right],
	\label{eqn: particle I}
\end{equation}
where 
\begin{equation}
	\nabla \frac{\delta E_{\epsilon}^N}{\delta f}(v)=\int_{\mathbb{R}^d}\nabla \psi_{\epsilon}(v-u)\log\left(\sum_{k=1}^N w_k\psi_{\epsilon}(u-v_k)\right)\mathrm{d}u.
\label{eqn: grad of variation I}
\end{equation}

Furthermore, apply the second order composite mid-point quadrature rule to approximate the velocity integral in \eqref{eqn: grad of variation I}, one can obtain a discrete-in-velocity particle method. Truncate the whole space by a velocity computational domain $\Omega=[-L,L]^d$ with $L$ large enough. Let $n_o$ be the number of particles per dimension, then the mesh size is $h=2L/n_o$, the total particle number is $N=n_o^d$. Denote the squares of mesh as $Q_l, l=1,\cdots, N$, the velocity grid points $v_l^c$ are the center of $Q_l$.	
	
Hence, the fully discretized regularized entropy is
\begin{equation}
\bar{E}_{\epsilon}^N=\sum_{l=1}^N h^d \left(\sum_{i=1}^N w_i\psi_{\epsilon}(v_l^c-v_i)\right)\log\left(\sum_{k=1}^N w_k\psi_{\epsilon}(v_l^c-v_k)\right),
\label{eqn: dis_entropy I}	
\end{equation}
the gradient to the fully discretized first variation of the entropy is
\begin{equation}
\bar{F}^N_{\epsilon}(v_i):=\nabla \frac{\delta \bar{E}_{\epsilon}^N}{\delta f}(v_i)=\sum_{l=1}^N h^d \nabla \psi_{\epsilon} (v_i-v_l^c)\log\left(\sum_{k=1}^N w_k\psi_{\epsilon}(v_l^c-v_k)\right).
\label{eqn: dis_variation I}	
\end{equation}
The discrete-in-velocity particle method reads
\begin{equation}
	\frac{\mathrm{d}v_i}{\mathrm{d}t}=\bar{U}_{\epsilon}(f^N)(t,v_i)
	=-\sum_{j=1}^N w_j A(v_i-v_j)\left[ \bar{F}^N_{\epsilon}(v_i)-\bar{F}^N_{\epsilon}(v_j) \right].
	\label{eqn: discrete particle I}
\end{equation}
The particle method \eqref{eqn: discrete particle I} is a deterministic particle method for the homogeneous Landau equation \eqref{eqn: Landau} which guarantees the conservation of discrete mass, momentum and energy exactly, and the discrete entropy \eqref{eqn: dis_entropy I} dissipation up to $O(h^2)$ \cite{CarrilloHuWangWu20}.

Using the forward Euler method for time discretization, we summarize the particle method \eqref{eqn: discrete particle I} in Alg. \ref{algo:I}. Note that for Alg. \ref{algo:I}, the discrete mass and momentum are conserved exactly, while the discrete energy is only conserved up to $O(\Delta t)$. This is stated in \cite{CarrilloHuWangWu20} and we will give a proof in the random batch setting in section \ref{subsec: RBM type I}.

\begin{algorithm}[H]
\caption{Particle method for the homogeneous Landau equation (type I)}
{\bf Input} The number of particles per dimension is $n_o$, then $N=n_o^d$. Start time $t_0$ and terminal time $t_{end}$, time step $\Delta t$. Regularizing parameter $\epsilon$. Velocity domain $[-L, L]^d$ with $L>0$, then $h=2L/n_o$. Square centers $v_l^c, l=1,\cdots, N$. 

{\bf Initialization} Particles $v_k^0$ and corresponding weights $w_k$, $k=1,\cdots,N$. 

At each time step $n=0,1,\cdots,\lfloor \frac{t_{end}-t_0}{\Delta t}\rfloor$:
\begin{algorithmic}[1]
\State Step 1: Compute the blob solution \cite{CarrilloCraigPatacchini19} for all the square centers (cost $O(N^2)$)
$$f_l^c=\sum_{k=1}^N w_k \psi_{\epsilon}(v_l^c-v_k^n), \quad \forall \ l=1,\cdots, N.$$ 
\State Step 2: Compute the gradient to fully discretized first variation of the regularized entropy
(cost $O(N^2)$)
$$F_i=\sum_{l=1}^N h^d \nabla \psi_{\epsilon} (v_i^n-v_l^c)\log f_l^c,\quad \forall \ i=1,\cdots, N. $$
\State Step 3: Update the particle velocities (cost $O(N^2)$)
$$\frac{v_i^{n+1}-v_i^n}{\Delta t}=-\sum_{j=1}^N w_j A(v_i^n-v_j^n)( F_i-F_j),\quad \forall \ i=1,\cdots, N.$$
\end{algorithmic}
{\bf Output} Particle velocities at $t_{end}$.
\label{algo:I}
\end{algorithm}

\subsubsection{Type II method}

The corresponding entropy of type II under the particle formulation \eqref{eqn: empirical} is
\begin{equation}
	\tilde{E}_{\epsilon}^N=\tilde{E}_{\epsilon}(f^N)=\int_{\mathbb{R}^d} f^N\log (f^N* \psi_{\epsilon}) \mathrm{d}v=\sum_{i=1}^N w_i\log\left(\sum_{k=1}^N w_k\psi_{\epsilon}(v_i-v_k)\right).
\label{eqn: dis_entropy II}
\end{equation}
Analogously to type I, the evolution for the particle velocity of type II is
\begin{equation}
	\frac{\mathrm{d}v_i}{\mathrm{d}t}=\tilde{U}_{\epsilon}(f^N)(t,v_i(t))=-\sum_{j=1}^N w_j A(v_i-v_j)\left[\nabla \frac{\delta \tilde{E}_{\epsilon}^N}{\delta f}(v_i)- \nabla \frac{\delta \tilde{E}_{\epsilon}^N}{\delta f}(v_j)\right],
	\label{eqn: particle II}
\end{equation}
where 
$$
\nabla \frac{\delta \tilde{E}_{\epsilon}^N}{\delta f}(v)=\frac{\sum_{k=1}^N w_k\nabla\psi_{\epsilon}(v-v_k)}{\sum_{k=1}^N w_k\psi_{\epsilon}(v-v_k)}+\sum_{k=1}^N w_k\frac{\nabla \psi_{\epsilon}(v-v_k)}{\sum_{m=1}^N w_m \psi_{\epsilon}(v_k-v_m)}.
$$
Since there is no convolution outside logarithmic term in the regularized entropy \eqref{eqn: dis_entropy II}, this type of regularization is free of numerical quadrature in velocity. The discrete mass, momentum and energy of \eqref{eqn: particle II} are conserved, while the discrete entropy \eqref{eqn: dis_entropy II} is dissipated exactly too.

Similarly, using the forward Euler method for time discretization of \eqref{eqn: particle II} yields Alg. \ref{algo:II}, whose discrete energy is also conserved up to $O(\Delta t)$ due to first order Euler in time.

\begin{algorithm}[H]
\caption{Particle method for the homogeneous Landau equation (type II)}
{\bf Input} The number of particles per dimension is $n_o$, then $N=n_o^d$. Start time $t_0$ and terminal time $t_{end}$, time step $\Delta t$. Regularizing parameter $\epsilon$. 

{\bf Initialization} Particles $v_i^0$ and corresponding weights $w_i$, $i=1,\cdots,N$. 

At each time step $n=0,1,\cdots,\lfloor \frac{t_{end}-t_0}{\Delta t}\rfloor$:
\begin{algorithmic}[1]
\State Step 1: Compute the regularized distribution (cost $O(N^2)$)
$$f_i=\sum_{k=1}^N w_k \psi_{\epsilon}(v_i^n-v_k^n), \quad \forall \ i=1,\cdots, N.$$ 
\State Step 2: Compute the gradient to the discretized first variation of the regularized entropy (cost $O(N^2)$)
$$F_i=\sum_{k=1}^N w_k\nabla\psi_{\epsilon}(v_i^n-v_k^n)\left(\frac{1}{f_i}+\frac{1}{f_k}\right),\quad \forall \ i=1,\cdots, N. $$
\State Step 3: Update the particle velocities (cost $O(N^2)$)
$$\frac{v_i^{n+1}-v_i^n}{\Delta t}=-\sum_{j=1}^N w_j A(v_i^n-v_j^n)( F_i-F_j) ,\quad \forall \ i=1,\cdots, N.$$
\end{algorithmic}
{\bf Output} Particle velocities at $t_{end}$.
\label{algo:II}
\end{algorithm}

\section{Random batch particle methods for the regularized homogeneous Landau equation}
\label{sec: RBM Landau}

In either the semi-discrete form \eqref{eqn: particle I} or the discrete-in-velocity form \eqref{eqn: discrete particle I} and \eqref{eqn: particle II} of the above particle methods, one needs to sum over all the particles $v_j$ to evolve particle $v_i$, which leads to an $O(N^2)$ computational cost for each time step. In order to significantly reduce the cost, we apply the random batch method \cite{JinLiLiu20} to this summation. At each time step, randomly divide the $N$ particles into $q=N/p$ batches $\mathcal{C}_v, v=1,\cdots, q$ with batch size $p\ll N$. Then particle $v_i$ will update itself only with those particles in the same batch.

\subsection{Type I RBM}
\label{subsec: RBM type I}

Mathematically, \eqref{eqn: particle I} is changed to
\begin{equation}
\begin{aligned}
	&\frac{\mathrm{d}v_i(t)}{\mathrm{d}t} = U^*_{\epsilon}(f^N)(t,v_i(t))\\
	&=-\frac{N-1}{p-1}\sum_{j\neq i, j\in \mathcal{C}_v} w_j A(v_i-v_j)\left[\nabla \frac{\delta E_{\epsilon}^N}{\delta f}(v_i)- \nabla \frac{\delta E_{\epsilon}^N}{\delta f}(v_j)\right], \quad i\in \mathcal{C}_v.
\end{aligned}
	\label{eqn: RBM particle I}
\end{equation}
Since the interactions only take place with the batch of $p$ particles, the computational cost is $O(pN)$ per time step.

Define the indicator function 
$$
I_{ij}=\left\{\begin{array}{ll}
1,\quad i,j \text{ in the same batch},\\
0,\quad i,j \text{ not in the same batch}.\\
\end{array}\right.
$$
Then, \eqref{eqn: RBM particle I} can be rewritten as
\begin{equation}
	\frac{\mathrm{d}v_i(t)}{\mathrm{d}t}=U^*_{\epsilon}(f^N)(t,v_i(t))
	=-\frac{N-1}{p-1}\sum_{j=1}^N w_jI_{ij} A(v_i-v_j)B_{ij}^N,
		\label{eqn: RBM semi}
\end{equation}
where
$B_{ij}^N=\nabla \frac{\delta E_{\epsilon}^N}{\delta f}(v_i)- \nabla \frac{\delta E_{\epsilon}^N}{\delta f}(v_j)$.
The next proposition shows that \eqref{eqn: RBM semi} inherits the desired properties due to the symmetry of $I_{ij}$. 

\begin{prop}
	The semi-discrete random batch particle method of type I \eqref{eqn: RBM semi} satisfies the following properties:
	\begin{enumerate}
		\item Conservation of mass, momentum and energy: 
		$$\frac{\mathrm{d}}{\mathrm{d}t}\sum_{i=1}^N w_i\phi(v_i)=0 \quad \text{for } \phi(v)=1, v, |v|^2.$$ 
		\item Dissipation of entropy: $\frac{\mathrm{d} E_{\epsilon}^N}{\mathrm{d}t}=-D_{\epsilon}^*\leq 0$, where
		$$
		D_{\epsilon}^*=\frac{N-1}{2(p-1)}\sum_{i=1}^N\sum_{j=1}^N w_iw_j I_{ij}B_{ij}^N\cdot A(v_i-v_j)B_{ij}^N.
		$$
	\end{enumerate}
	\label{prop:1}
\end{prop}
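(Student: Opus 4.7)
The plan is to exploit three symmetries of the summand in \eqref{eqn: RBM semi}: the batch indicator is symmetric ($I_{ij}=I_{ji}$), the collision kernel is even ($A(v_i-v_j)=A(v_j-v_i)$), and the first-variation difference is antisymmetric ($B_{ij}^N = -B_{ji}^N$). These are exactly the structural features that make the deterministic scheme \eqref{eqn: particle I} conservative, and since the RBM modification only inserts the symmetric factor $I_{ij}$ in front, all the swap arguments should carry over essentially unchanged.

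For conservation I would treat the three invariants in order. Mass is immediate because the weights $w_i$ do not evolve. For momentum I would write
\[
\frac{\mathrm{d}}{\mathrm{d}t}\sum_i w_i v_i = -\frac{N-1}{p-1}\sum_{i,j} w_i w_j I_{ij} A(v_i-v_j) B_{ij}^N,
\]
relabel $i\leftrightarrow j$ on the right-hand side, and observe that all three symmetric/antisymmetric factors combine to flip the sign, forcing the sum to vanish. For energy I would take $\phi(v)=|v|^2$, so that $\frac{\mathrm d}{\mathrm d t}\sum_i w_i |v_i|^2 = 2\sum_i w_i v_i\cdot\dot v_i$; splitting $2v_i = (v_i+v_j)+(v_i-v_j)$, the symmetric part $(v_i+v_j)$ cancels by the same swap argument used for momentum, while the antisymmetric part leaves a factor $(v_i-v_j)\cdot A(v_i-v_j)B_{ij}^N$ which is zero thanks to the property $A(z)z=0$ (equivalently $\ker A(z) = \mathbb{R}z$) stated in the introduction.

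For the entropy dissipation, I would first compute $\frac{\mathrm d E_\epsilon^N}{\mathrm d t}$ by the chain rule, noting that since $f^N=\sum_i w_i\delta(v-v_i(t))$ satisfies a continuity equation with velocity field $U^*_\epsilon(f^N)$, one has
\[
\frac{\mathrm d E_\epsilon^N}{\mathrm d t} = \sum_i w_i\,\dot v_i\cdot\nabla\frac{\delta E_\epsilon^N}{\delta f}(v_i).
\]
Substituting \eqref{eqn: RBM semi} for $\dot v_i$ gives a double sum; applying the $i\leftrightarrow j$ swap to the inner factor $\nabla\frac{\delta E_\epsilon^N}{\delta f}(v_i)$ produces a twin expression with $\nabla\frac{\delta E_\epsilon^N}{\delta f}(v_j)$ and an overall sign flip (since $A$ is even, $I_{ij}$ symmetric, and $B_{ij}^N$ antisymmetric). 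Averaging the two expressions converts the single gradient into the difference $B_{ij}^N$ itself, yielding
\[
\frac{\mathrm d E_\epsilon^N}{\mathrm d t} = -\frac{N-1}{2(p-1)}\sum_{i,j} w_i w_j I_{ij}\, B_{ij}^N\cdot A(v_i-v_j) B_{ij}^N = -D^*_\epsilon,
\]
and non-positivity of $D^*_\epsilon$ follows from the pointwise positive semi-definiteness of $A$ together with $w_i,w_j,I_{ij}\ge 0$.

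I do not expect a real obstacle here: the only subtlety is making sure the chain rule for $E_\epsilon^N(t)$ is written with the correct $\nabla\frac{\delta E_\epsilon^N}{\delta f}(v_i)$ factor rather than a spurious boundary term from differentiating under the convolution, but since $E_\epsilon^N$ is a smooth function of the finitely many $v_i$'s through the mollifier $\psi_\epsilon$, this is routine. Everything else reduces to mechanical symmetrization of finite sums together with the two algebraic identities $A(z)z=0$ and $A(z)\succeq 0$.
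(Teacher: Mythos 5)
Your proposal is correct and follows essentially the same route as the paper: conservation by symmetrizing the double sum over $i\leftrightarrow j$ using the symmetry of $I_{ij}$ and $A$, the antisymmetry of $B_{ij}^N$, and $A(z)z=0$ for the energy; entropy decay by the chain rule $\frac{\mathrm{d}E_\epsilon^N}{\mathrm{d}t}=\sum_i w_i\,\dot v_i\cdot\nabla\frac{\delta E_\epsilon^N}{\delta f}(v_i)$ followed by the same symmetrization. The one subtlety you flag (the potential extra term from differentiating inside the logarithm) is exactly the term the paper isolates as $I_2$ and shows vanishes because $\int\psi_\epsilon=1$, so your treatment matches theirs.
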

\begin{proof}
\begin{enumerate}
	\item Indeed, 
	$$
	\begin{aligned}
		\frac{\mathrm{d}}{\mathrm{d}t}\sum_{i=1}^N w_i\phi(v_i)
		&=~\sum_{i=1}^N w_i \nabla \phi(v_i) \cdot
		U^*_{\epsilon}(f^N)(t,v_i(t))\\
	&=-\frac{N-1}{p-1}\sum_{i=1}^N\sum_{j=1}^N w_i w_j I_{ij} \nabla\phi(v_i)\cdot A(v_i-v_j)B_{ij}^N\\
	&=-\frac{N-1}{2(p-1)}\sum_{i=1}^N\sum_{j=1}^N w_i w_j I_{ij}\left(\nabla\phi(v_i)-\nabla\phi(v_j)\right) \cdot A(v_i-v_j)B_{ij}^N,
	\end{aligned}
	$$
	which vanishes with $\phi(v)=1, v, |v|^2$ since $v \in \ker A(v)$.
\item It follows from \eqref{eqn: entropy I} that
\begin{align*}
	\frac{\mathrm{d}E_{\epsilon}^N}{\mathrm{d}t}=&\int_{\mathbb{R}^d}\sum_{i=1}^N w_i\nabla\psi_{\epsilon}(v-v_i)\cdot\left(-\frac{\mathrm{d} v_i(t)}{\mathrm{d}t}\right)\log\left(\sum_{k=1}^N w_k\psi_{\epsilon}(v-v_k)\right)\mathrm{d}v\\
& +\int_{\mathbb{R}^d}\left(\sum_{i=1}^N w_i\psi_{\epsilon}(v-v_i)\right)\frac{\sum_{k=1}^N w_k\nabla\psi_{\epsilon}(v-v_k) \cdot\left(-\frac{\mathrm{d} v_k(t)}{\mathrm{d}t}\right)}{\sum_{k=1}^N w_k\psi_{\epsilon}(v-v_k)}\mathrm{d}v\\
=&: I_1+I_2.
\end{align*}
Note that $$I_2=-\int_{\mathbb{R}^d}\sum\limits_{k=1}^N w_k\nabla\psi_{\epsilon}(v-v_k) \cdot\frac{\mathrm{d} v_k(t)}{\mathrm{d}t}\mathrm{d}v=\frac{\mathrm{d}}{\mathrm{d}t}\sum\limits_{k=1}^N w_k \int_{\mathbb{R}^d}\psi_{\epsilon}(v-v_k) \mathrm{d}v = 0 $$
since $\int_{\mathbb{R}^d}\psi_{\epsilon}(v-v_k) \mathrm{d}v=1$.
Besides,
$$
 \begin{aligned}
 	I_1&=\sum_{i=1}^N w_i
 	\left(\int_{\mathbb{R}^d}-\nabla\psi_{\epsilon}(v-v_i)\log\left(\sum_{k=1}^N w_k\psi_{\epsilon}(v-v_k)\right)\mathrm{d}v\right)\cdot\frac{\mathrm{d} v_i}{\mathrm{d}t}\\
 	&=\sum_{i=1}^N w_i \nabla \frac{\delta E_{\epsilon}^N}{\delta f}(v_i)\cdot\frac{\mathrm{d} v_i}{\mathrm{d}t}\\
 	&=-\frac{N-1}{p-1}\sum_{i=1}^N w_i \nabla \frac{\delta E_{\epsilon}^N}{\delta f}(v_i) \cdot \sum_{j=1}^N w_j I_{ij} A(v_i-v_j)B_{ij}^N\\
 	&=-\frac{N-1}{2(p-1)}\sum\limits_{i=1}^N\sum_{j=1}^N w_i w_j I_{ij}B_{ij}^N\cdot A(v_i-v_j)B_{ij}^N\\
 	&=-D_{\epsilon}^*\leq 0.
 \end{aligned}
$$
This completes the proof of entropy dissipation.
\end{enumerate}
\end{proof}

Similarly, the random batch version of the discrete-in-velocity particle method \eqref{eqn: discrete particle I} reads
\begin{equation}
	\frac{\mathrm{d}v_i}{\mathrm{d}t}=\bar{U}^*_{\epsilon}(f^N)(t,v_i)
	=-\frac{N-1}{p-1}\sum_{j=1}^N w_j I_{ij} A(v_i-v_j)\left[ \bar{F}^N_{\epsilon}(v_i)-\bar{F}^N_{\epsilon}(v_j) \right].
	\label{eqn: RBM discrete I}
\end{equation}

\begin{prop}
\eqref{eqn: RBM discrete I} also inherits the following properties:
\begin{enumerate}
	\item Conservation of mass, momentum and energy:
	$$
	\frac{\mathrm{d}}{\mathrm{d}t}\sum_{i=1}^N w_i
	\begin{pmatrix}
		1\\
		v_i\\
		|v_i|^2
	\end{pmatrix}
	=\bm{0}.
	$$
	\item Dissipation of entropy up to $O(h^2)$: 
	$$
	\bar{E}_{\epsilon}^N(t)-\bar{E}_{\epsilon}^N(0)=-\int_0^t \bar{D}_{\epsilon}^* \mathrm{d}s+ O(h^2),
	$$
	where 
	$$\bar{D}_{\epsilon}^*=\frac{N-1}{2(p-1)}\sum\limits_{i=1}^N\sum\limits_{j=1}^N w_i w_j I_{ij}\left(\bar{F}^N_{\epsilon}(v_i)-\bar{F}^N_{\epsilon}(v_j)\right)\cdot A(v_i-v_j)\left(\bar{F}^N_{\epsilon}(v_i)-\bar{F}^N_{\epsilon}(v_j) \right)\geq 0.$$
\end{enumerate}
\label{prop:2}
\end{prop}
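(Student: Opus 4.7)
The plan is to mirror the proof of Proposition~\ref{prop:1}, the only genuine difference being that each exact velocity integral of the continuous argument is replaced by a composite midpoint sum and therefore carries an $O(h^2)$ quadrature error.

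For the conservation statement, I would start from
\[
\frac{\mathrm{d}}{\mathrm{d}t}\sum_{i=1}^N w_i \phi(v_i) = \sum_{i=1}^N w_i \nabla\phi(v_i) \cdot \bar{U}^*_{\epsilon}(f^N)(t,v_i),
\]
substitute \eqref{eqn: RBM discrete I}, and symmetrize the double sum using $I_{ij}=I_{ji}$ and $A(v_i-v_j)=A(v_j-v_i)$ to obtain
\[
-\frac{N-1}{2(p-1)}\sum_{i,j=1}^N w_i w_j I_{ij} \bigl(\nabla\phi(v_i)-\nabla\phi(v_j)\bigr) \cdot A(v_i-v_j)\bigl(\bar{F}^N_{\epsilon}(v_i)-\bar{F}^N_{\epsilon}(v_j)\bigr).
\]
This vanishes for $\phi(v)=1$ and $\phi(v)=v$ since $\nabla\phi$ is constant, and for $\phi(v)=|v|^2$ because $2(v_i-v_j)\in\ker A(v_i-v_j)$. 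This part is structurally identical to the first half of Proposition~\ref{prop:1}, with $\bar{F}^N_{\epsilon}$ playing the role of the continuous first variation.

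For the entropy statement, I would differentiate the blob values $\bar{f}_l := \sum_k w_k \psi_{\epsilon}(v_l^c-v_k(t))$ in time (the grid points $v_l^c$ are fixed) and use \eqref{eqn: dis_entropy I} to write
\[
\frac{\mathrm{d}\bar{E}_{\epsilon}^N}{\mathrm{d}t} = -\sum_{k=1}^N w_k \frac{\mathrm{d}v_k}{\mathrm{d}t}\cdot\sum_{l=1}^N h^d\bigl(\log \bar{f}_l+1\bigr)\nabla\psi_{\epsilon}(v_l^c-v_k).
\]
The oddness of $\nabla\psi_{\epsilon}$ combined with the definition \eqref{eqn: dis_variation I} shows that the piece of the inner sum carrying $\log \bar{f}_l$ equals $-\bar{F}^N_{\epsilon}(v_k)$ exactly, while the piece carrying the $+1$ is the midpoint quadrature approximation to $\int_{\mathbb{R}^d}\nabla\psi_{\epsilon}(v-v_k)\,\mathrm{d}v=0$ and is therefore of size $O(h^2)$ (truncation from $[-L,L]^d$ contributes only exponentially small tail terms subsumed by this). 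Inserting \eqref{eqn: RBM discrete I} into $\sum_k w_k \bar{F}^N_{\epsilon}(v_k)\cdot \mathrm{d}v_k/\mathrm{d}t$ and symmetrizing exactly as in Proposition~\ref{prop:1} turns this term into $-\bar{D}^*_{\epsilon}$, with $\bar{D}^*_{\epsilon}\ge 0$ from the positive semi-definiteness of $A$. Integrating the resulting identity
\[
\frac{\mathrm{d}\bar{E}_{\epsilon}^N}{\mathrm{d}t}=-\bar{D}^*_{\epsilon} + O(h^2)\sum_{k=1}^N w_k\Bigl|\frac{\mathrm{d}v_k}{\mathrm{d}t}\Bigr|
\]
from $0$ to $t$ then produces the claim.

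The main obstacle I anticipate is making the $O(h^2)$ remainder uniform in $t$, which requires a short a priori bound on $\sum_k w_k|\mathrm{d}v_k/\mathrm{d}t|$ over the finite interval $[0,t]$. Because $\psi_{\epsilon}$ and $\nabla\psi_{\epsilon}$ are smooth with Gaussian decay, $\bar{F}^N_{\epsilon}$ is bounded in terms of $\epsilon$, $L$ and the current particle positions, and $A(v_i-v_j)$ is polynomially bounded on the truncated domain, so this reduces to a standard Gr\"onwall-type estimate on the velocities. Once that is in hand, the remaining argument is pure algebra already carried out in Proposition~\ref{prop:1}, combined with the $C^2$ accuracy of composite midpoint quadrature at fixed $\epsilon$.
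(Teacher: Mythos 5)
Your proposal is correct and follows exactly the route the paper intends: the paper's own proof of this proposition is just the remark that conservation is identical to Proposition~\ref{prop:1} and that the entropy argument is its discrete-in-velocity version, with the $O(h^2)$ error coming from the composite midpoint rule applied to $\int\nabla\psi_{\epsilon}(v-v_k)\,\mathrm{d}v=0$ — precisely the term you isolate. Your write-up is in fact more detailed than the paper's (notably in flagging the need for a uniform-in-time bound on $\sum_k w_k|\mathrm{d}v_k/\mathrm{d}t|$), but it is the same argument.
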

The proof of conservation is the same as that in Proposition \ref{prop:1}. The proof of entropy decay is the discrete version of that in Proposition \ref{prop:1}, while the $O(h^2)$ error is brought by the mid-point composite quadrature rule.

Based on these, we apply RBM to the summation of Step 3 in Alg. \ref{algo:I}. Besides, we make use of the rapid decay property of the mollifier $\psi_{\epsilon}$ to reduce the summation in Steps 1 and 2. Denote $\sigma$ as the velocity distance to be considered in the summation of Steps 1 and 2, the computational cost is again $O(N)$ per time step resorting to the cell list data structure {\cite[Appendix F]{FrenkelSmit01}}. This gives rise to Alg. \ref{algo:RBM I} as follows.

\begin{algorithm}[H]
\caption{Random batch particle method for the homogeneous Landau equation (type I)}
{\bf Input} The number of particles per dimension is $n_o$, then $N=n_o^d$. Start time $t_0$ and terminal time $t_{end}$, time step $\Delta t$. Regularizing parameter $\epsilon$. Closeness parameter $\sigma$. Batch size $p\ll N$. Velocity domain $[-L, L]^d$ with $L>0$, then $h=2L/n_o$. Square centers $v_l^c, l=1,\cdots, N$. 

{\bf Initialization} Particles $v_k^0$ and corresponding weights $w_k$, $k=1,\cdots,N$. 

At each time step $n=0,1,\cdots,\lfloor \frac{t_{end}-t_0}{\Delta t}\rfloor$, do the following:
\begin{algorithmic}[1]
\State Step 1: Compute the blob solution \cite{CarrilloCraigPatacchini19} using only the particles within velocity distance $\sigma$ to $v_l^c$ (cost $O(N)$)
$$f_l^c=\sum_{k\in \mathcal{A}_l} w_k \psi_{\epsilon}(v_l^c-v_k^n), \quad \mathcal{A}_l=\{k: \abs{v_l^c-v_k^n}\leq \sigma \}, \quad \forall \ l=1,\cdots, N.$$ 
\State Step 2: Compute the gradient to the fully discretized first variation of entropy using only the square centers within velocity distance $\sigma$ to $v_i^n$ (cost $O(N)$)
$$F_i=\sum_{l\in \mathcal{B}_i} h^d \nabla \psi_{\epsilon} (v_i^n-v_l^c)\log f_l^c, \quad \mathcal{B}_i=\{l: \abs{v_i^n-v_l^c}\leq \sigma \},
\quad \forall \ i=1,\cdots, N. $$
\State Step 3: Divide $N$ particles into $q=N/p$ batches $\mathcal{C}_v$ randomly.

For each batch $\mathcal{C}_v, v=1,\cdots, q$, update the particles by 
\begin{equation}
	\frac{v_i^{n+1}-v_i^n}{\Delta t}=-\frac{N-1}{p-1}\sum_{j\in \mathcal{C}_v } w_j A(v_i^n-v_j^n)( F_i-F_j) ,\quad \forall \ i\in \mathcal{C}_v.
	\label{eq: RBM}
\end{equation}
(The interactions only take place among the particles in the same batch, thus the computational cost is $O(Np)$.)
\end{algorithmic}
{\bf Output} Particle velocities at $t_{end}$.
\label{algo:RBM I}
\end{algorithm}

Likewise, due to the symmetry of $I_{ij}$, one has
\begin{prop}
The discrete mass and momentum are conserved, while the discrete energy is conserved up to $O(\Delta t)$ for Alg. \ref{algo:RBM I}.
Namely,
\begin{equation*}
   \sum_i w_i=const,\quad \frac{\sum_i w_i v_i^{n+1}- \sum_i w_i v_i^{n}}{\Delta t}=0,\quad \frac{\sum_i w_i |v_i^{n+1}|^2- \sum_i w_i |v_i^{n}|^2}{\Delta t}=O(\Delta t).
\end{equation*}
\label{prop:3}
\end{prop}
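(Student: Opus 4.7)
The plan is to treat the three conservation laws separately, closely following the continuous-time proof in Proposition \ref{prop:1} but accounting for the forward Euler truncation error. Mass conservation is immediate since the weights $w_i$ do not evolve along the iteration, so $\sum_i w_i$ is constant in $n$.

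For momentum, I would compute
$$\frac{\sum_i w_i v_i^{n+1} - \sum_i w_i v_i^n}{\Delta t} = -\frac{N-1}{p-1}\sum_{i=1}^N\sum_{j=1}^N w_i w_j I_{ij} A(v_i^n-v_j^n)(F_i-F_j),$$
using the indicator-function rewriting of \eqref{eq: RBM} (exactly as in \eqref{eqn: RBM semi}). Then the symmetry $I_{ij}=I_{ji}$ combined with $A(-z)=A(z)$ allows the standard swap $i\leftrightarrow j$ trick: the right-hand side equals its own negative, hence vanishes, and momentum is conserved exactly.

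For the energy bound, the forward Euler update gives the elementary identity
$$|v_i^{n+1}|^2-|v_i^n|^2 = 2 v_i^n\cdot(v_i^{n+1}-v_i^n) + |v_i^{n+1}-v_i^n|^2.$$
Multiplying by $w_i$, summing, and dividing by $\Delta t$, the quadratic remainder produces a term of size $\Delta t\,\sum_i w_i |\text{RHS}_i|^2 = O(\Delta t)$, which accounts for the stated error. The linear-in-increment term equals
$$-\frac{2(N-1)}{p-1}\sum_{i,j} w_i w_j I_{ij}\, v_i^n\cdot A(v_i^n-v_j^n)(F_i-F_j),$$
and by the same $(i,j)$-symmetrization used for momentum it rewrites as
$$-\frac{N-1}{p-1}\sum_{i,j} w_i w_j I_{ij}\,(v_i^n-v_j^n)\cdot A(v_i^n-v_j^n)(F_i-F_j).$$
Since $(v_i^n-v_j^n)\in \ker A(v_i^n-v_j^n)$, each summand is zero, so the leading-order contribution vanishes and only the $O(\Delta t)$ Euler remainder survives.

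The only subtlety, and the one place I would be careful, is in controlling the $|v_i^{n+1}-v_i^n|^2$ remainder uniformly: one needs that $\sum_i w_i |\mathrm{RHS}_i|^2$ stays bounded in $n$, which follows from the assumed boundedness of $v_i^n$, $F_i$, and the kernel $A$ on the computational support, so the remainder is indeed $O(\Delta t)$ in the stated sense. Everything else is just the symmetrization argument of Proposition \ref{prop:1} transcribed to the fully discrete RBM iteration.
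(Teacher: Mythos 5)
Your proof is correct and follows essentially the same route as the paper: exact momentum conservation via the $i\leftrightarrow j$ symmetrization with $I_{ij}=I_{ji}$ and $A(-z)=A(z)$, and for the energy the expansion $|v_i^n+\Delta t\,\mathrm{RHS}_i|^2$ whose linear term vanishes because $(v_i^n-v_j^n)\in\ker A(v_i^n-v_j^n)$ and whose quadratic term gives the $O(\Delta t)$ remainder. Your added remark on the uniform boundedness needed to justify the $O(\Delta t)$ bound is a reasonable point the paper leaves implicit.
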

\begin{proof}
The mass conservation is automatical. To see the momentum conservation, multiplying \eqref{eq: RBM} by $w_i$ and sum over $i$, one gets
$$
\frac{\sum_i w_i v_i^{n+1}- \sum_i w_i v_i^{n}}{\Delta t}=-\frac{N-1}{p-1}\sum_{i=1}^N\sum_{j=1}^N w_i w_j I_{ij} A(v_i^n-v_j^n)( F_i-F_j)=0.
$$ 
For the discrete energy, denote the RHS of \eqref{eq: RBM} as $\bar{U}^*_{\epsilon}(f^N)(t,v_i^n)$, then we can obtain

$$
\begin{aligned}
	&\frac{\sum_i w_i |v_i^{n+1}|^2- \sum_i w_i |v_i^{n}|^2}{\Delta t}=\frac{\sum_i w_i \left|v_i^n +\Delta t \bar{U}^*_{\epsilon}(f^N)(t,v_i^n)\right|^2- \sum_i w_i \left|v_i^{n}\right|^2}{\Delta t}\\
=&~2\sum_{i=1}^N w_i v_i^n\bar{U}^*_{\epsilon}(f^N)(t,v_i^n)+\Delta t \sum_{i=1}^N w_i \left|\bar{U}^*_{\epsilon}(f^N)(t,v_i^n)\right|^2\\
=&~-\frac{2(N-1)}{p-1}\sum_{i=1}^N\sum_{j=1}^N w_i w_j I_{ij} A(v_i^n-v_j^n)( F_i-F_j)\cdot v_i^n +O(\Delta t)\\
=&~-\frac{(N-1)}{p-1}\sum_{i=1}^N\sum_{j=1}^N w_i w_j I_{ij} A(v_i^n-v_j^n)( F_i-F_j)\cdot (v_i^n-v_j^n)+O(\Delta t)\\
=&~O(\Delta t).
\end{aligned}
$$
\end{proof}

\subsection{Type II RBM}

Like in type I, in order to update the particle velocity $v_i$, one needs to sum over all the particles $v_j$. This is very time-consuming. So we can adopt the random batch idea to reduce the cost of \eqref{eqn: particle II} from $O(N^2)$ to $O(N)$ per time step.
That is, 
\begin{equation}
\begin{aligned}
	\frac{\mathrm{d}v_i}{\mathrm{d}t}=&~\tilde{U}_{\epsilon}^*(f^N)(t,v_i)\\
	=&-\frac{N-1}{p-1}\sum_{j\neq i, j\in \mathcal{C}_v} w_j A(v_i-v_j)\left[\nabla \frac{\delta \tilde{E}_{\epsilon}^N}{\delta f}(v_i)- \nabla \frac{\delta \tilde{E}_{\epsilon}^N}{\delta f}(v_j)\right],\quad i\in\mathcal{C}_v \\
=&-\frac{N-1}{p-1}\sum_{j=1}^N w_j I_{ij} A(v_i-v_j)\tilde{B}_{ij}^N,
\end{aligned}
	\label{eqn: RBM particle II}
\end{equation}
with $\tilde{B}_{ij}^N=\nabla \frac{\delta \tilde{E}_{\epsilon}^N}{\delta f}(v_i)- \nabla \frac{\delta \tilde{E}_{\epsilon}^N}{\delta f}(v_j)$. The random batch particle method of regularization type II \eqref{eqn: RBM particle II} inherits the desired properties due to the symmetry of $I_{ij}$.
\begin{prop}
	\eqref{eqn: RBM particle II} satisfies the following properties:
	\begin{enumerate}
		\item Conservation of mass, momentum and energy: 
		$$\frac{\mathrm{d}}{\mathrm{d}t}\sum_{i=1}^N w_i\phi(v_i)=0 \quad \text{for } \phi(v)=1, v, |v|^2.$$
		\item Dissipation of entropy: $\frac{\mathrm{d} \tilde{E}_{\epsilon}^N}{\mathrm{d}t}=-\tilde{D}_{\epsilon}^*\leq 0$, where
		$$
		\tilde{D}_{\epsilon}^*=\frac{N-1}{2(p-1)}\sum_{i=1}^N\sum_{j=1}^N w_iw_j I_{ij}\tilde{B}_{ij}^N\cdot A(v_i-v_j)\tilde{B}_{ij}^N.
		$$
	\end{enumerate}
	\label{prop:4}
\end{prop}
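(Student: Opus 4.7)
The plan is to follow exactly the template of the proof of Proposition \ref{prop:1}, exploiting the symmetry $I_{ij}=I_{ji}$ and the kernel/positivity properties of $A$. The only genuinely new ingredient is the time derivative of $\tilde{E}_\epsilon^N$, whose form differs from that of $E_\epsilon^N$ because here the logarithm sits outside the convolution and the entropy is a finite sum rather than an integral.

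For conservation, I would multiply \eqref{eqn: RBM particle II} by $w_i\,\nabla\phi(v_i)$, sum over $i$, and symmetrize the resulting double sum using $I_{ij}=I_{ji}$ and $A(v_i-v_j)=A(v_j-v_i)$ to write
$$
\frac{d}{dt}\sum_i w_i\phi(v_i)= -\frac{N-1}{2(p-1)}\sum_{i,j} w_iw_j I_{ij}\bigl(\nabla\phi(v_i)-\nabla\phi(v_j)\bigr)\cdot A(v_i-v_j)\tilde{B}_{ij}^N.
$$
For $\phi(v)=1,v,|v|^2$ the factor $\nabla\phi(v_i)-\nabla\phi(v_j)$ equals $0$, $0$, or $2(v_i-v_j)$ respectively; since $v_i-v_j\in\ker A(v_i-v_j)$, the sum vanishes in all three cases.

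For the entropy dissipation, the key intermediate identity to establish is
$$
\frac{d\tilde{E}_\epsilon^N}{dt}= \sum_{i=1}^N w_i\,\dot v_i\cdot \nabla\frac{\delta \tilde{E}_\epsilon^N}{\delta f}(v_i).
$$
Starting from $\tilde{E}_\epsilon^N=\sum_i w_i\log\!\bigl(\sum_k w_k\psi_\epsilon(v_i-v_k)\bigr)$ and differentiating, each term in the sum produces two contributions, one from $\dot v_i$ and one from $\dot v_k$. Relabelling the dummy indices in the $\dot v_k$ piece and using $\nabla\psi_\epsilon(v_i-v_k)=-\nabla\psi_\epsilon(v_k-v_i)$, the two contributions combine into the bracket
$$
\frac{\sum_k w_k\nabla\psi_\epsilon(v_i-v_k)}{\sum_k w_k\psi_\epsilon(v_i-v_k)}+\sum_k w_k\frac{\nabla\psi_\epsilon(v_i-v_k)}{\sum_m w_m\psi_\epsilon(v_k-v_m)},
$$
which is precisely $\nabla\frac{\delta \tilde{E}_\epsilon^N}{\delta f}(v_i)$ as given above Proposition \ref{prop:4}. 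Substituting $\dot v_i$ from \eqref{eqn: RBM particle II} and symmetrizing the resulting double sum via $I_{ij}=I_{ji}$ and the symmetry of $A$ yields $-\tilde{D}_\epsilon^*$, which is non-positive because $A(v_i-v_j)$ is positive semi-definite.

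The main obstacle I expect is the bookkeeping in the entropy derivative: unlike the type I case, where one of the two contributions ($I_2$ in Proposition \ref{prop:1}) vanishes outright because $\int\psi_\epsilon=1$, here both contributions survive and must be matched against the two-term structure of $\nabla\delta\tilde{E}_\epsilon^N/\delta f$. The index swap and the antisymmetry of $\nabla\psi_\epsilon$ are what make this work, so care is needed to track signs and to ensure the two surviving sums line up with the two pieces of the first variation. Once that identity is in hand, the remaining argument is the same symmetrization/PSD argument as in Proposition \ref{prop:1}.
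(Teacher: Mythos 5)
Your proposal is correct and follows exactly the route the paper intends: the paper omits this proof, stating it is almost identical to that of Proposition \ref{prop:1}, and your argument is precisely that adaptation, with the key identity $\frac{\mathrm{d}\tilde{E}_{\epsilon}^N}{\mathrm{d}t}=\sum_i w_i \dot v_i\cdot\nabla\frac{\delta\tilde{E}_{\epsilon}^N}{\delta f}(v_i)$ correctly derived via the index swap and the oddness of $\nabla\psi_{\epsilon}$. Your observation that both contributions survive here (whereas $I_2$ vanishes in the type I case) and recombine into the two-term first variation is exactly the right bookkeeping.
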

The proof is almost identical to that of Proposition \ref{prop:1}, so it is omitted.

Again, the random batch particle method of type II can be summarized to a algorithm whose overall computational cost is $O(N)$ per time step. Proposition \ref{prop:3} holds for Alg. \ref{algo:RBM II} as well.

\begin{algorithm}[H]
\caption{Random batch particle method for the homogeneous Landau equation (type II)}
{\bf Input} The number of particles per dimension is $n_o$, then $N=n_o^d$. Start time $t_0$ and terminal time $t_{end}$, time step $\Delta t$. Regularizing parameter $\epsilon$. Closeness parameter $\sigma$. Batch size $p\ll N$. 

{\bf Initialization} Particles $v_i^0$ and corresponding weights $w_i$, $i=1,\cdots,N$. 

At each time step $n=0,1,\cdots,\lfloor \frac{t_{end}-t_0}{\Delta t}\rfloor$, do the following:
\begin{algorithmic}[1]
\State Step 1: Compute the regularized distribution using only the particles within velocity distance $\sigma$ to $v_i^n$ (cost $O(N)$)
$$f_i=\sum_{k\in \mathcal{A}_i} w_k \psi_{\epsilon}(v_i^n-v_k^n), \quad \mathcal{A}_i=\{k: \abs{v_i^n-v_k^n}\leq \sigma \}, \quad \forall \ i=1,\cdots, N.$$ 

\State Step 2: Compute the gradient to discretized first variation of the entropy (cost $O(N)$)
$$F_i=\sum_{k\in \mathcal{A}_i} w_k\nabla\psi_{\epsilon}(v_i^n-v_k^n)
\left(\frac{1}{f_i}+\frac{1}{f_k}\right),\quad \forall \ i=1,\cdots, N. $$
\State Step 3: Divide $N$ particles into $q=N/p$ batches $\mathcal{C}_v$ randomly.

For each batch $\mathcal{C}_v, v=1,\cdots, q$, update the particles by 
\begin{equation}
	\frac{v_i^{n+1}-v_i^n}{\Delta t}=-\frac{N-1}{p-1}\sum_{j\in \mathcal{C}_v } w_j A(v_i^n-v_j^n)( F_i-F_j) ,\quad \forall \ i\in \mathcal{C}_v.
	\label{eq: RBM II}
\end{equation}
(The interactions only take place among the particles in the same batch, thus the computational cost is $O(Np)$.)
\end{algorithmic}
{\bf Output} Particle velocities at $t_{end}$.
\label{algo:RBM II}
\end{algorithm}

\section{Numerical experiments}
\label{sec: numer}

In this section, we compare the particle methods in \cite{CarrilloHuWangWu20} and our random batch particle methods for the two types of regularization for the homogeneous Landau equation. 
We give four classical numerical examples. It shows that our random batch particle methods have almost second order accuracy as the deterministic particle methods while the cost can be reduced remarkably to $O(N)$. 

\begin{exmp}[2D BKW solution for Maxwell molecules]
Consider the case when $d=2$, $\gamma=0$, the collision kernel is $A(z)=\frac{1}{16}(|z|^2I_d-z\otimes z)$. 
Requiring the macroscopic quantities to be $\rho=1$, $u=\bm{0}$, $T=1$(thus $\int_{\mathbb{R}^d} f |v|^2
\mathrm{d}v=d=2$), the exact solution of the homogeneous Landau equation is given by
$$
f^{ext}(t,v)=\frac{1}{2\pi K}\exp\left(-\frac{|v|^2}{2K}\right)\left(\frac{2K-1}{K}+\frac{1-K}{2K^2}|v|^2\right),\quad K=1-\frac{\exp(-t/8)}{2}.
$$
\label{eg:1}
\end{exmp}

Let $t_0=0, t_{end}=5$. The real distributions in $[-4,4]^2$ are in Fig. \ref{fig:1}.
\begin{figure}[H]
\centering
\includegraphics[width=8.5cm,height=3.5cm]{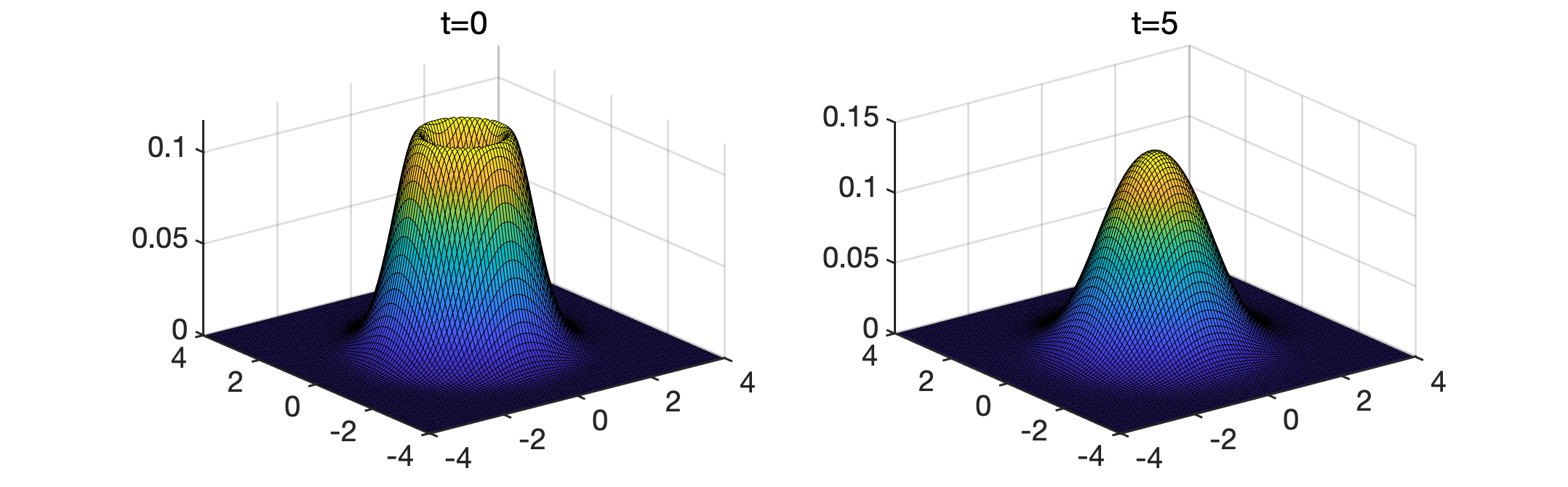}
\caption{Left: initial distribution. Right: exact distribution at $t_{end}$.}
\label{fig:1}
\end{figure}

First of all, we test the convergence and the computational cost with respect to particle numbers. The number of particles per dimension is chosen as 
$n_o=40, 60, 80, 100, 120, 140,$\\$ 160, 180, 200, 220, 240$ respectively, then the total particle number is $N=n_o^2$. Set $L=8$ for Algs. \ref{algo:I} and \ref{algo:RBM I}, $L=10$ for Algs. \ref{algo:II} and \ref{algo:RBM II}. The truncated length $L$ is chosen such that the particles do not escape from the computational velocity domain $\Omega=[-L, L]^2$ during their time span. We initialize the particle velocities uniformly in the support $[-4, 4]^2$, the weights are given according to $f^{ext}(0,v)$. Let $\Delta t = 0.01, h = 2L/n_o$. The default regularization parameter is $\epsilon=0.64h^{1.98}$ as in \cite{CarrilloHuWangWu20}. Besides, in the random batch particle methods: Algs. \ref{algo:RBM I} and \ref{algo:RBM II}, the closeness parameter $\sigma=4\sqrt{\epsilon}$, the number of batches per dimension $q_o=n_o/p_o$ is fixed as $5$. The error is measured by the relative $L_2$ error defined as 
$$\frac{\sqrt{\sum_{l=1}^N h^d\left|f^{ext}(t,v_l^c)-\psi_{\epsilon}*f^N(t,v_l^c)\right|^2}}{\sqrt{\sum_{l=1}^N h^d\left|f^{ext}(t,v_l^c)\right|^2}}.$$
It is clear from Fig. \ref{fig:new conv}(left) that all the four algorithms have second order decay in the relative $L_2$ error in $1/n_o$. 
As for the computational cost, we can conclude from Fig. \ref{fig:new conv}(right) that the cost for the original particle methods (Algs. \ref{algo:I}-\ref{algo:II}) are a bit lower than $O(N^2)$, while the cost for the random batch particle methods (Algs. \ref{algo:RBM I}-\ref{algo:RBM II}) are $O(N)$. 

\begin{figure}[H]
	\centering
	\includegraphics[width=5.5cm,height=5cm]{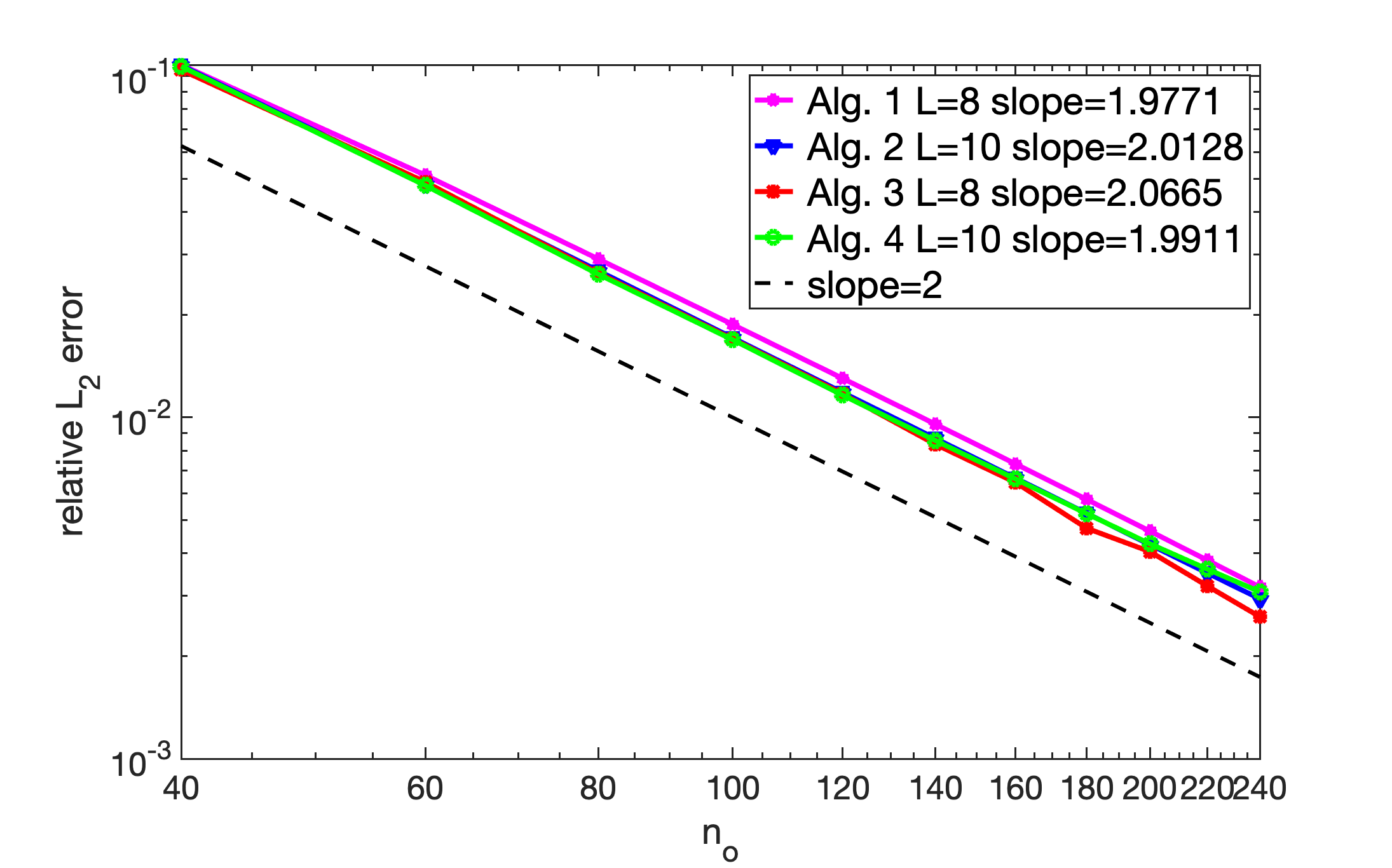}
	\includegraphics[width=5.5cm,height=5cm]{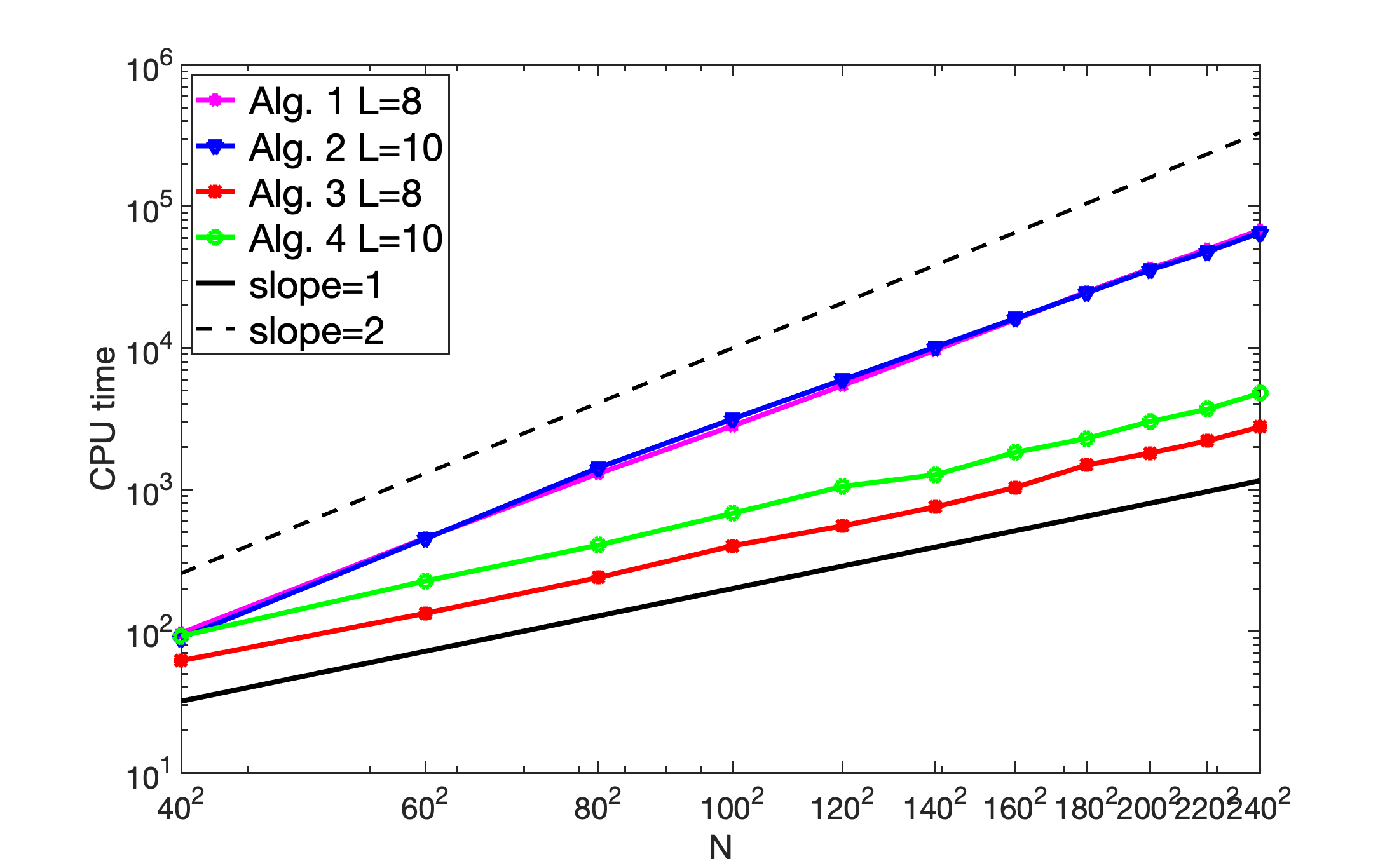}
\caption{Convergence result for Algs. \ref{algo:I} and \ref{algo:RBM I} when $L=8$ and Algs. \ref{algo:II} and \ref{algo:RBM II} when $L=10$.}
\label{fig:new conv}
\end{figure}

Next, we show the time evolution of the four algorithms in Fig. \ref{fig:n120}. It is clear that the total energy is conserved up to a small error, the momentum is conserved within machine precision, the entropy is dissipated, the relative $L_2$ error is stable. In Fig. \ref{fig:time2}, we plot the time evolution of Alg. \ref{algo:RBM I} with regard to different particle numbers. We can observe similar performance for the other algorithms. 
\begin{figure}[H]
	\centering
	\includegraphics[width=14cm,height=4cm]{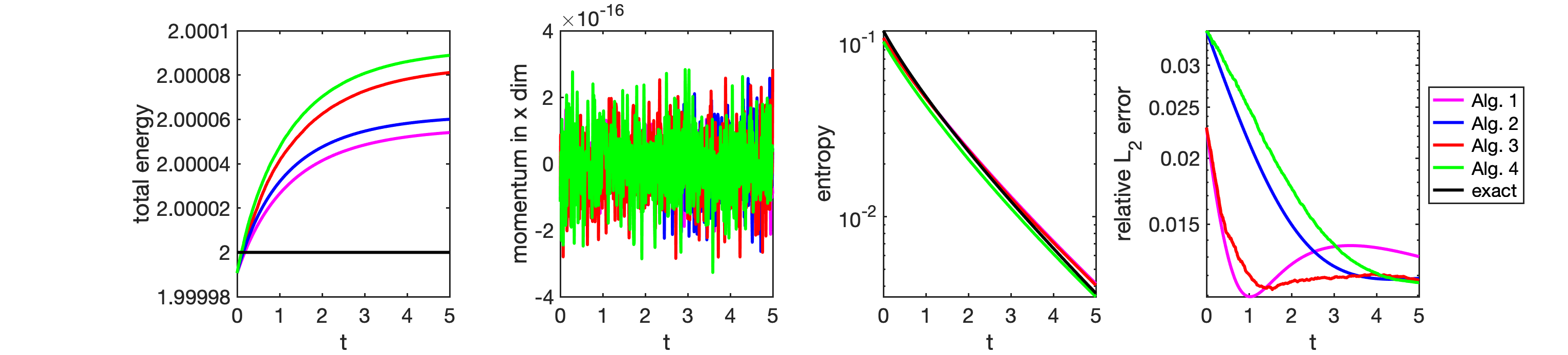}
\caption{Time evolution of the total energy, momentum in the $x$-dim, entropy, the relative $L_2$ error when $n_o=120$.}
\label{fig:n120}
\end{figure}

\begin{figure}[H]
	\centering
	\includegraphics[width=14cm,height=4cm]{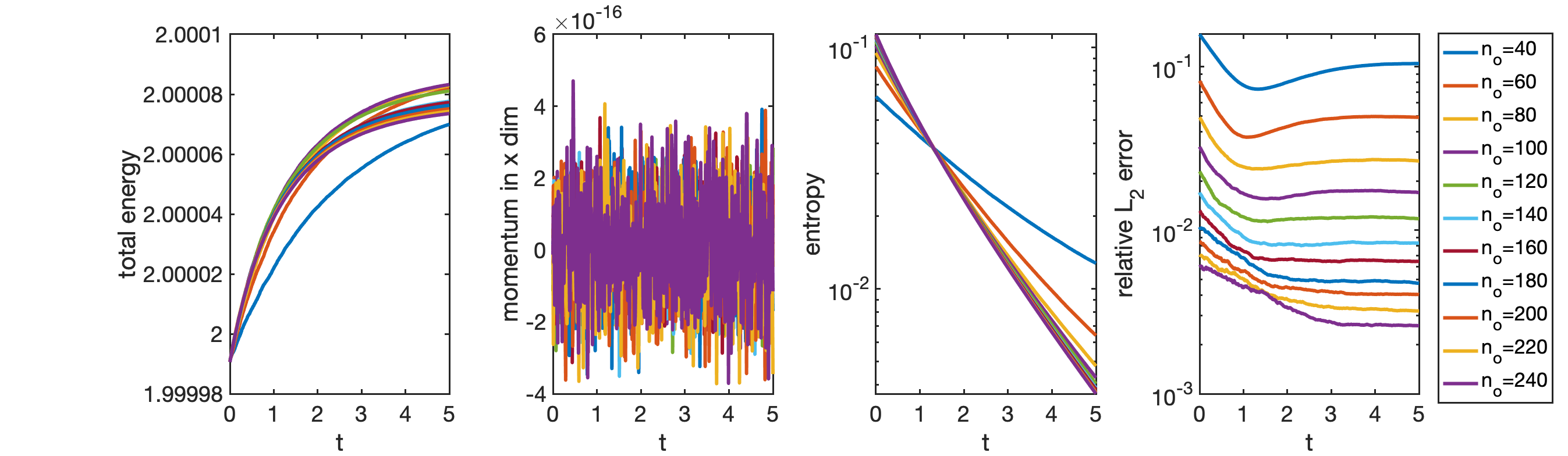}
\caption{Time evolution of the total energy, momentum in the $x$-dim, entropy, the relative $L_2$ error for Alg. \ref{algo:RBM I}.}
\label{fig:time2}
\end{figure}

\begin{exmp}[3D BKW solution for Maxwell molecules]
Consider the case when $d=3$, $\gamma=0$, the collision kernel is
$A(z)=\frac{1}{24}(|z|^2I_d-z\otimes z)$. Requiring the macroscopic quantities to be $\rho=1$, $u=\bm{0}$, $T=1$(thus $\int_{\mathbb{R}^d} f |v|^2 \mathrm{d}v=d=3$), the exact solution of the homogeneous Landau equation is given by
$$
f^{ext}(t,v)=\frac{1}{(2\pi K)^{3/2}}\exp\left(-\frac{|v|^2}{2K}\right)\left(\frac{5K-3}{2K}+\frac{1-K}{2K^2}|v|^2\right),\quad K=1-\exp(-t/6).
$$
\label{eg:2}
\end{exmp}

Let $t_0=5.5, t_{end}=6$. 
As in Example \ref{eg:1}, we test the convergence and the computational cost. The number of particles per dimension are chosen as $n_o=30, 40, 50, 60, 70, 80, 90$ respectively, then the total particle number is $N=n_o^3$. 
To ensure that the particles will always stay in $\Omega$, take $L=8$ and initialize the particles uniformly in $[-4,4]^3$. 
Take $\Delta t = 0.01, h = 2L/n_o, \epsilon=0.64h^{1.98}$, in Algs. \ref{algo:RBM I}-\ref{algo:RBM II}, let $\sigma=4\sqrt{\epsilon}$, $q_o=n_o/p_o=2$. 
The results are given in Fig. \ref{fig:new_conv_eg2}.
Seen from Fig. \ref{fig:new_conv_eg2}(left), one can observe roughly second order decay rate in $1/n_o$. The orders are lower for regularization type II. From Fig. \ref{fig:new_conv_eg2}(right), we see the cost for the original particle methods (megenta and blue solid lines) are a bit lower than $O(N^2)$. But the cost for the random batch methods (red and green solid lines) are higher than $O(N)$. For larger $n_o$, the slopes for random batch methods tend to $O(N^2)$. This is due to small value of $q_o$. We only use $q_o=2$ batches per dimension, so the saving by random batch is not very efficient while the extra cost brought by the cell list is heavy. So we also test the case where $q_o=5$, which means we use $1/125$ of total particles for different $N$. In this case, the speed up is obvious. The cost for random batch methods with $q_o=5$ is $O(N)$, see the red and green dashed lines in Fig. \ref{fig:new_conv_eg2}(right). However, one would sacrifice somewhat from accuracy for large $q_o$, see the red and green dashed lines in Fig. \ref{fig:new_conv_eg2}(left).
In addition, it is preferable to choose Alg. \ref{algo:RBM I} out of four in terms of both time and accuracy.

\begin{figure}[H]
\centering
\includegraphics[width=12.5cm,height=5cm]{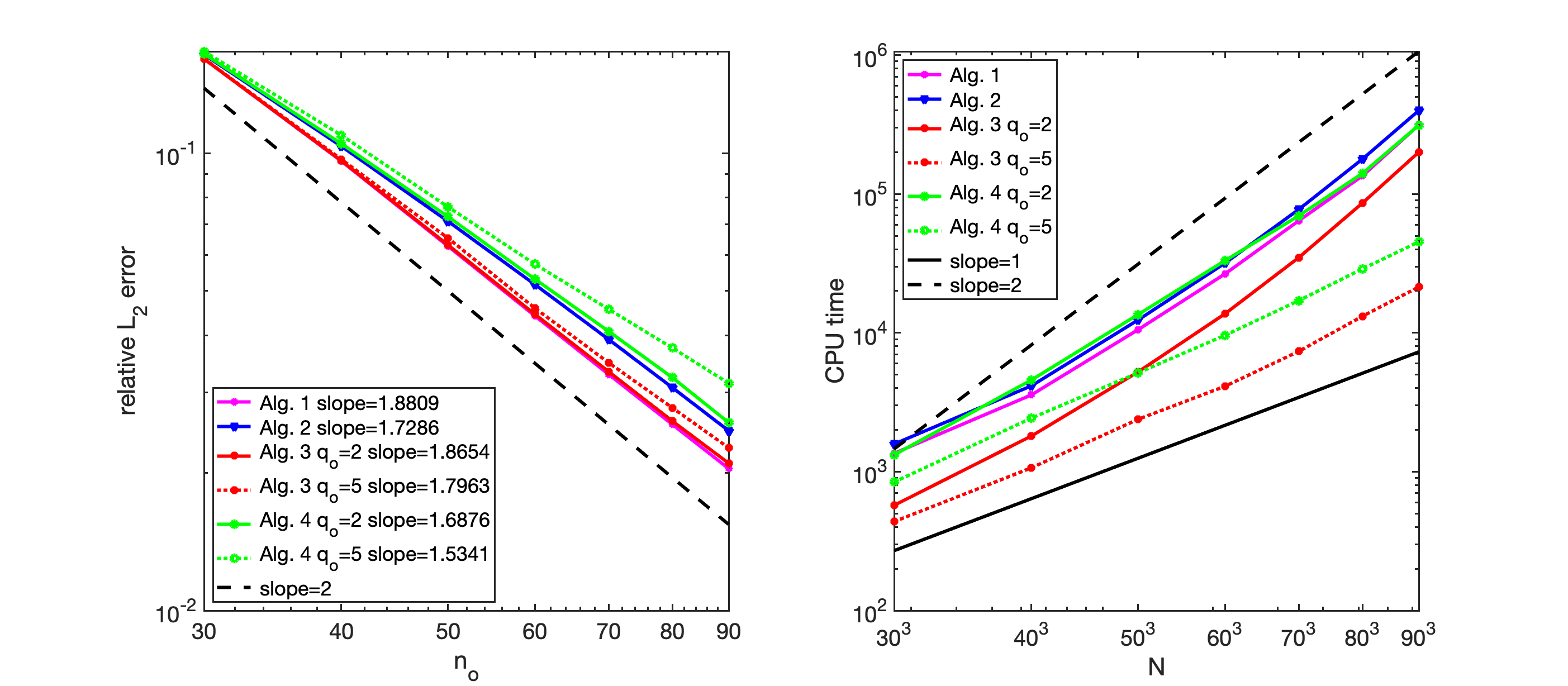}
\caption{Convergence results with $L=8$. Left: Relative $L^2$ norm of the error at $t_{end}$ w.r.t. different $n_o$. Right: CPU time w.r.t. different $N$.}
\label{fig:new_conv_eg2}
\end{figure}

Next, we show the performance of the four algorithms. 
Fig. \ref{fig:evolution} shows the evolution of conserved quantities and the relative $L_2$ error. The behavior is similar to that in Example \ref{eg:1}.
\begin{figure}[H]
\centering
\includegraphics[width=14cm,height=4cm]{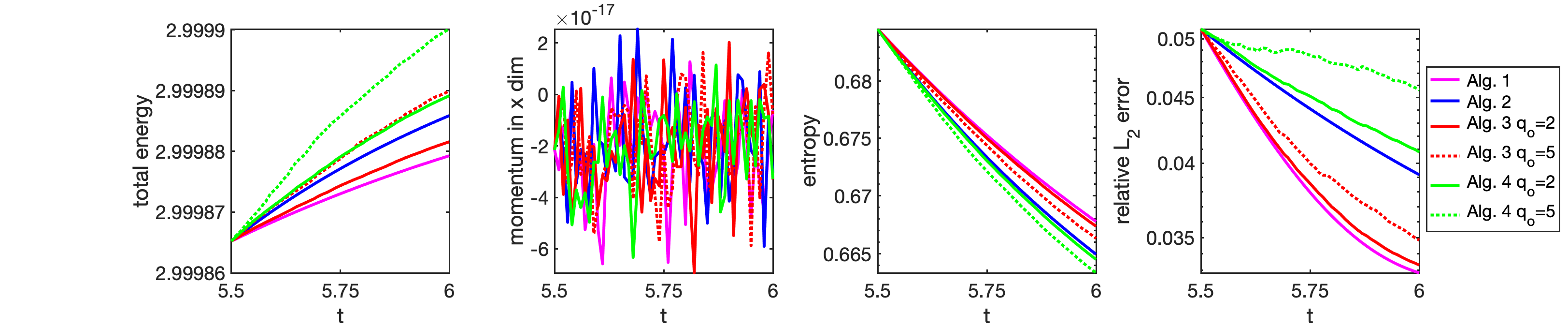}
\caption{Time evolution of the total energy, momentum in the $x$-dim, entropy, the relative $L_2$ error when $n_o=70$.}
\label{fig:evolution}
\end{figure}

Fig. \ref{fig:slice} illustrates the snapshots of the solutions $f(:, \frac{n_o}{2}, \frac{n_o}{2})$ at $t=5.5, 5.75, 6$ respectively when $n_o=70$. We also plot snapshots of the solutions versus different $n_o$ at $t=5.5, 5.75, 6$ in Fig. \ref{fig:slice_n} computed by Alg. \ref{algo:RBM I} with $q_o=5$, where we can observe better match as $n_o$ increases. 
\begin{figure}[H]
\centering
\includegraphics[width=14cm,height=4cm]{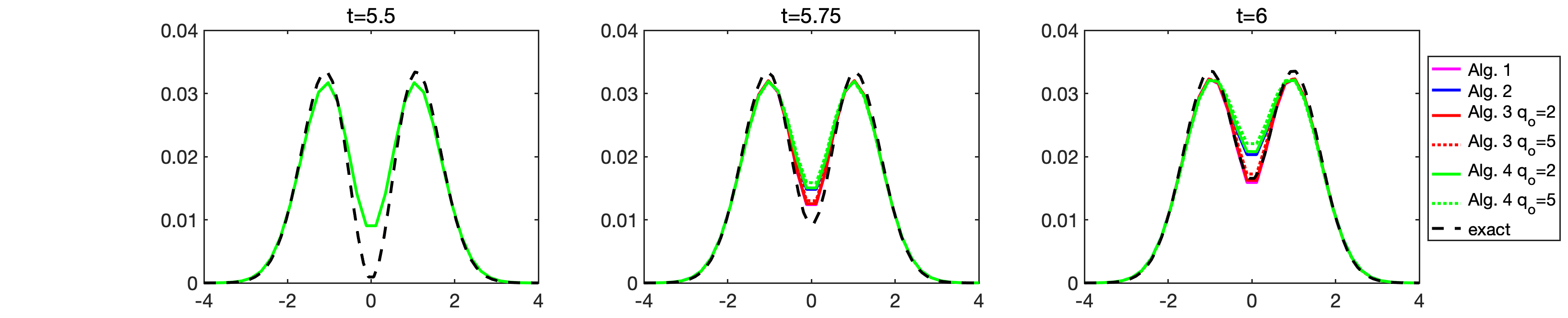}
\caption{Comparison of the four particle methods. Snapshots $f(:,\frac{n_o}{2},\frac{n_o}{2})$ 
at different times when $n_o=70$.}
\label{fig:slice}
\end{figure}

\begin{figure}[H]
\centering
\includegraphics[width=14cm,height=4cm]{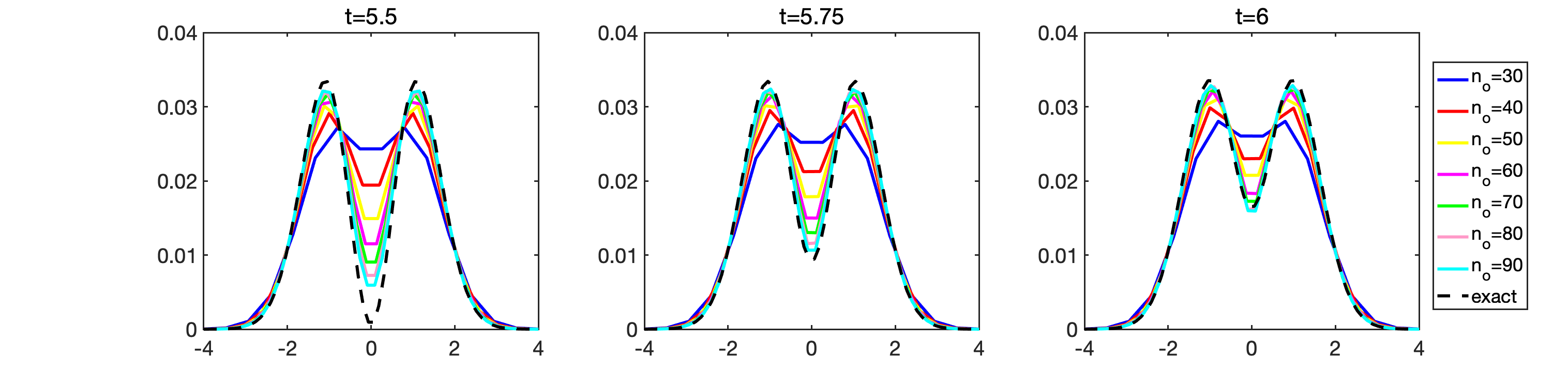}
\caption{Snapshots $f(:, \frac{n_o}{2}, \frac{n_o}{2})$ of Alg. \ref{algo:RBM I} with $q_o=5$ w.r.t different $n_o$ at different times.}
\label{fig:slice_n}
\end{figure}

\begin{exmp}[2D anistropic solution with Coulomb potential]
Consider the case when $d=2, \gamma=-3$, the collision kernel is $A(z)=\frac{1}{16}\frac{1}{|z|^3}(|z|^2I_d-z\otimes z)$. 
The initial condition is chosen to be bi-Maxwellian
$$
f(0,v)=\frac{1}{4\pi}\left\{\exp\left(-\frac{(v-u_1)^2}{2}\right)+\exp\left(-\frac{(v-u_2)^2}{2}\right)\right\},\quad u_1=(-2, 1), u_2=(0,-1).
$$
\label{eg:3}
\end{exmp}

For this example, we do not have the exact solution. Therefore, we only compare the performance of the four algorithms. Let $\Delta t=0.1, L=10, q_o=5$. The CPU time of the four algorithms in Fig. \ref{fig:time_eg3} coincides with the computational cost. Fig. \ref{fig:slices_t_eg3} illustrates the snapshots of the solutions $f(\frac{n_o}{2},:)$ at $t=0, 20, 40$ respectively when $n_o=180$. Finally, we use Alg. \ref{algo:I} with $n_o=180$ as a reference solution. For Alg. \ref{algo:RBM I}, test $n_o=40, 80, 120, 160$ respectively. We can observe better match as $n_o$ increases.

\begin{figure}[H]
\centering
\includegraphics[width=4.5cm,height=4cm]{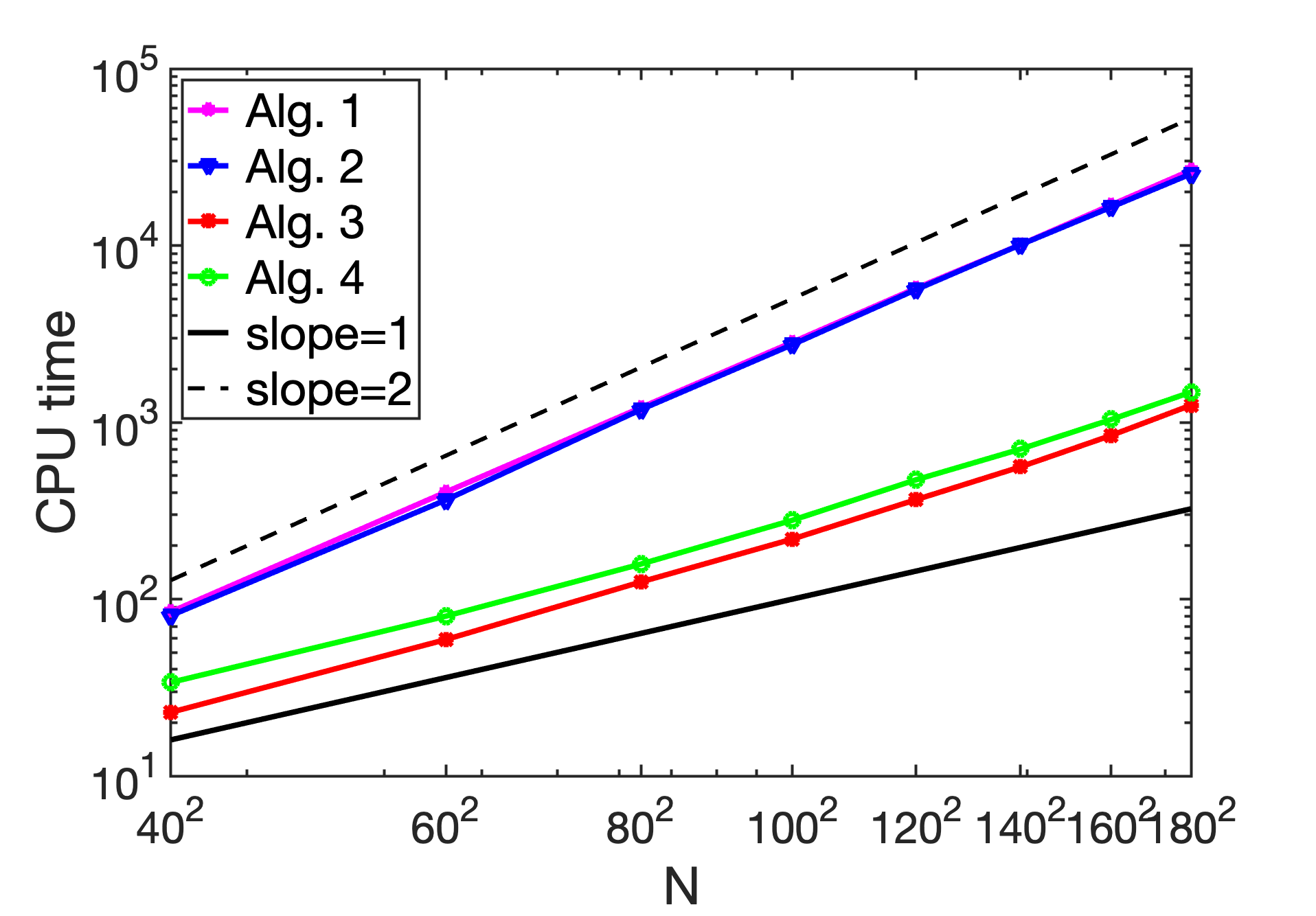}
\caption{CPU time of the four algorithms w.r.t different $N$ at time $t=20$.}
\label{fig:time_eg3}
\end{figure}

\begin{figure}[H]
\centering
\includegraphics[width=14cm,height=4cm]{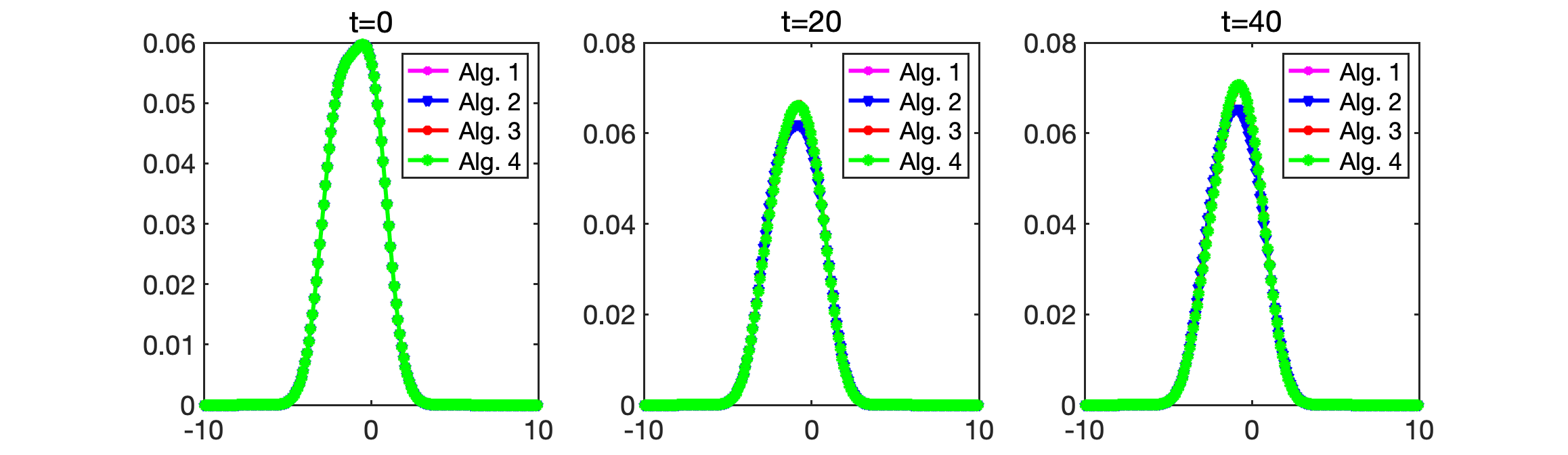}
\caption{Snapshots $f(\frac{n_o}{2},:)$ at different times when $n_o=180$.}
\label{fig:slices_t_eg3}
\end{figure}

\begin{figure}[H]
\centering
\includegraphics[width=9cm,height=3.2cm]{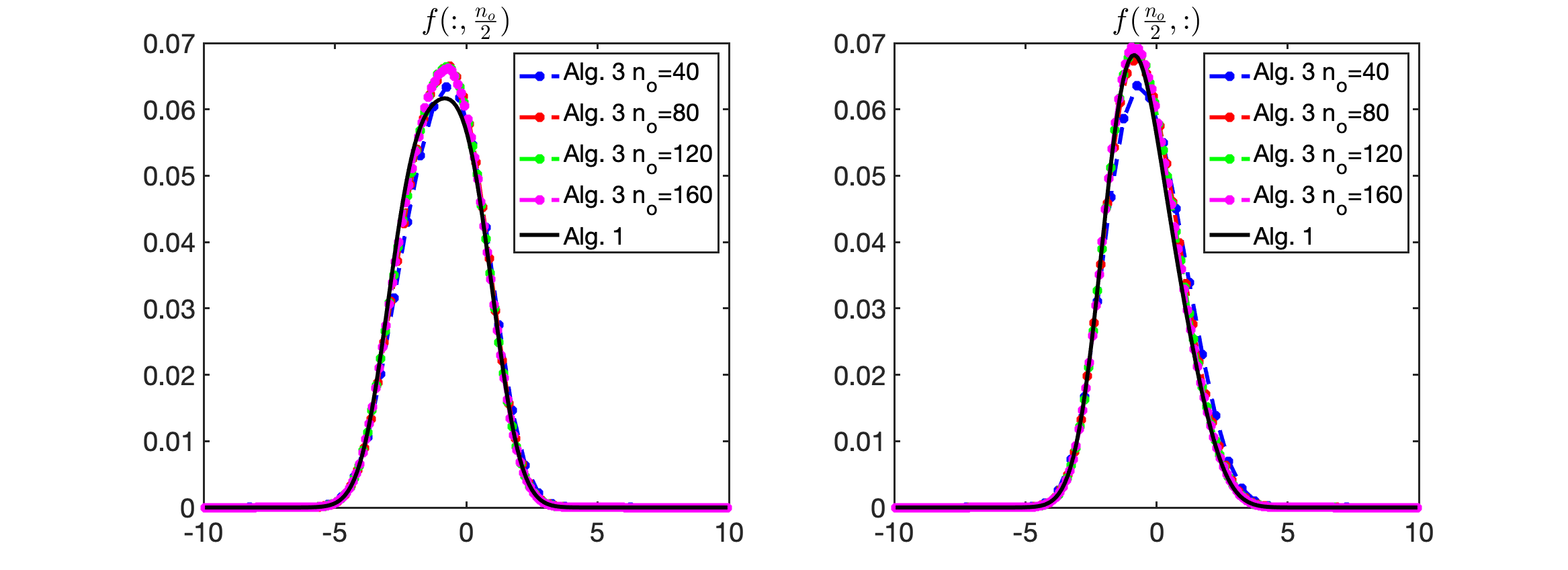}
\caption{Comparison of Alg. \ref{algo:RBM I} (using different $n_o$) with Alg. \ref{algo:I}($n_o=180$). Snapshots of the solutions at time $t=20$.}
\label{fig:slice_eg3}
\end{figure}

\begin{exmp}[3D Rosenbluth problem with Coulomb potential]
Consider the case when $d=3, \gamma=-3$, the collision kernel is $A(z)=\frac{1}{4\pi}\frac{1}{|z|^3}(|z|^2I_d-z\otimes z)$. 
The initial condition is given by
$$
f(0,v)=\frac{1}{S^2}\exp\left(-S\frac{(|v|-\mu)^2}{\mu^2}\right),\quad \mu=0.3,\quad S=10.
$$
\label{eg:4}
\end{exmp}

Let $\Delta t=0.2, L=1, q_o=4$.The cross sections $f(v_x,0,0)$ of the four algorithms at $t=0, 5, 10$ respectively are shown in Fig. \ref{fig:slice_eg4}. As time goes by, it occurs to Alg. \ref{algo:I} that the particles collide for large $n_o$. As the Coulomb kernel $A(z)$ is singular, the computation would break down once two particles collide. This can be overcome with a smaller time step. Since the homogeneous Landau equation is of diffusive type, $\Delta t=O(\Delta v^2), \Delta v=h=2L/n_o$ which is time-consuming. However, when using the random batch particle methods, the probability of two particles being close all the time is sufficiently small due to random reshuffling at each time step. So one can use a relatively bigger time step. Hence, when one wants to get the long time behavior of the Landau equation, it is preferable to use Algs. \ref{algo:RBM I} and \ref{algo:RBM II}. The cost is $O(N)$ for Algs. \ref{algo:RBM I}-\ref{algo:RBM II} as shown in Fig. \ref{fig:time_eg4}. In Fig. \ref{fig: slices_t_eg4}(left), the cross sections of the distribution function at times $t=0, 9, 36, 81, 144, 225, 900$ by Alg. \ref{algo:RBM I} are depicted. The results are in good agreement with those given in \cite{BuetCordier98,PareschiRussoToscani00}. The profiles approach the Maxwellian \eqref{eqn: Maxwellian} with 
$$\rho=\frac{2\pi\mu^3}{S^2}\left[\left(\frac{1}{2S}+1\right)\sqrt{\frac{\pi}{S}} \erfc(-\sqrt{S})+\frac{1}{S}\exp(-S)\right], \quad u=\bm{0},$$ $$T=\frac{1}{3\rho}\frac{2\pi\mu^5}{S^2}\left[ \left(1+\frac{3}{S}+\frac{3}{4S^2}\right)\sqrt{\frac{\pi}{S}} \erfc(-\sqrt{S}) +\left(\frac{1}{S}+\frac{5}{2S^2}\right)\exp(-S)\right]$$
in time as expected. Seen from Fig. \ref{fig: slices_t_eg4}, the energy and entropy are reaching the steady states as well.

\begin{figure}[H]
\centering
\includegraphics[width=14cm,height=3.9cm]{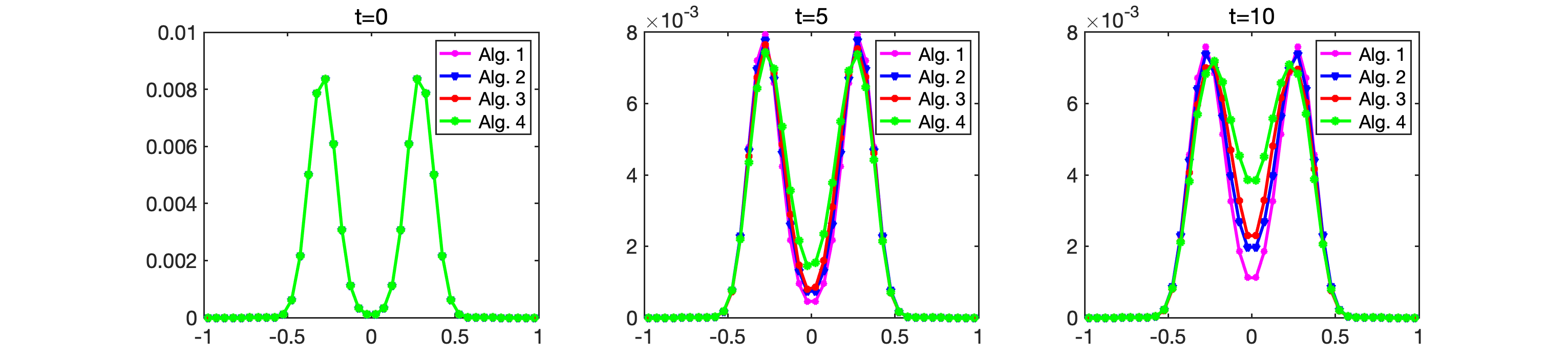}
\caption{Comparison of the four particle methods. Cross section $f(v_x,0,0)$ at different times when $n_o=40$.}
\label{fig:slice_eg4}
\end{figure}

\begin{figure}[H]
\centering
\includegraphics[width=4.5cm,height=4cm]{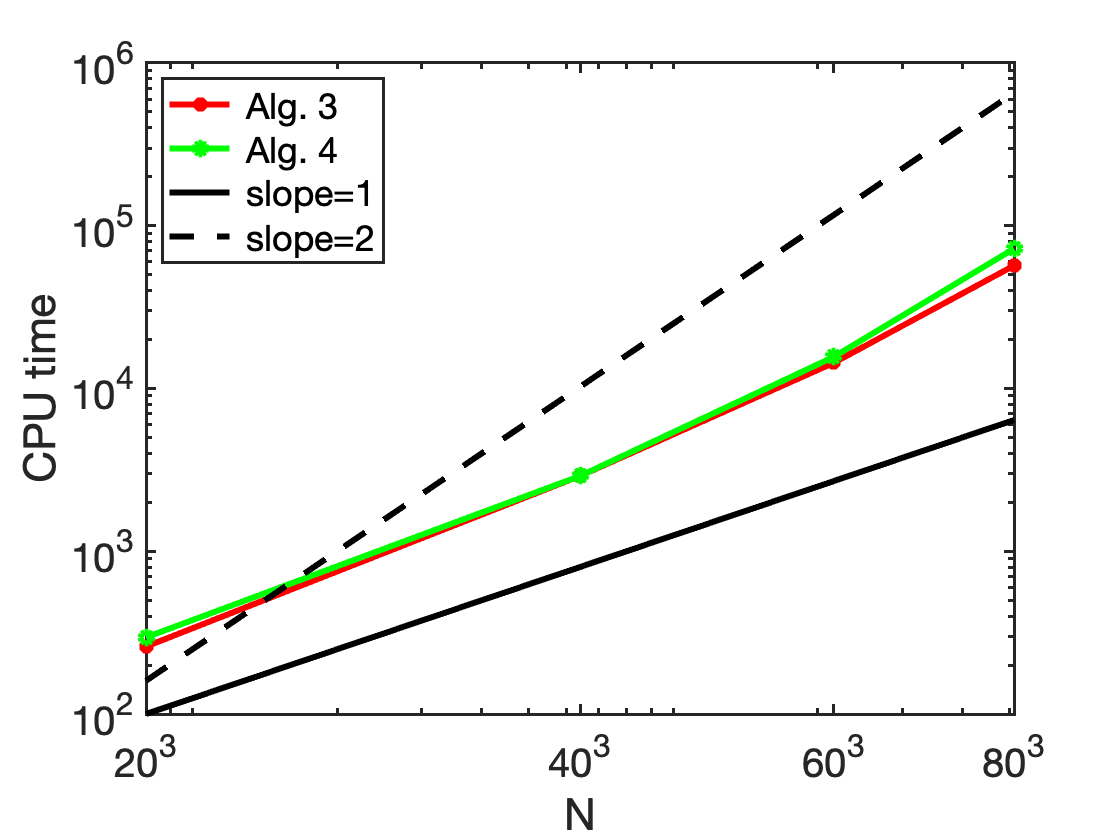}
\caption{CPU time of Algs. \ref{algo:RBM I}-\ref{algo:RBM II} w.r.t different $N$ at time $t=20$.}
\label{fig:time_eg4}
\end{figure}

\begin{figure}[H]
\centering
\includegraphics[width=4.5cm,height=4cm]{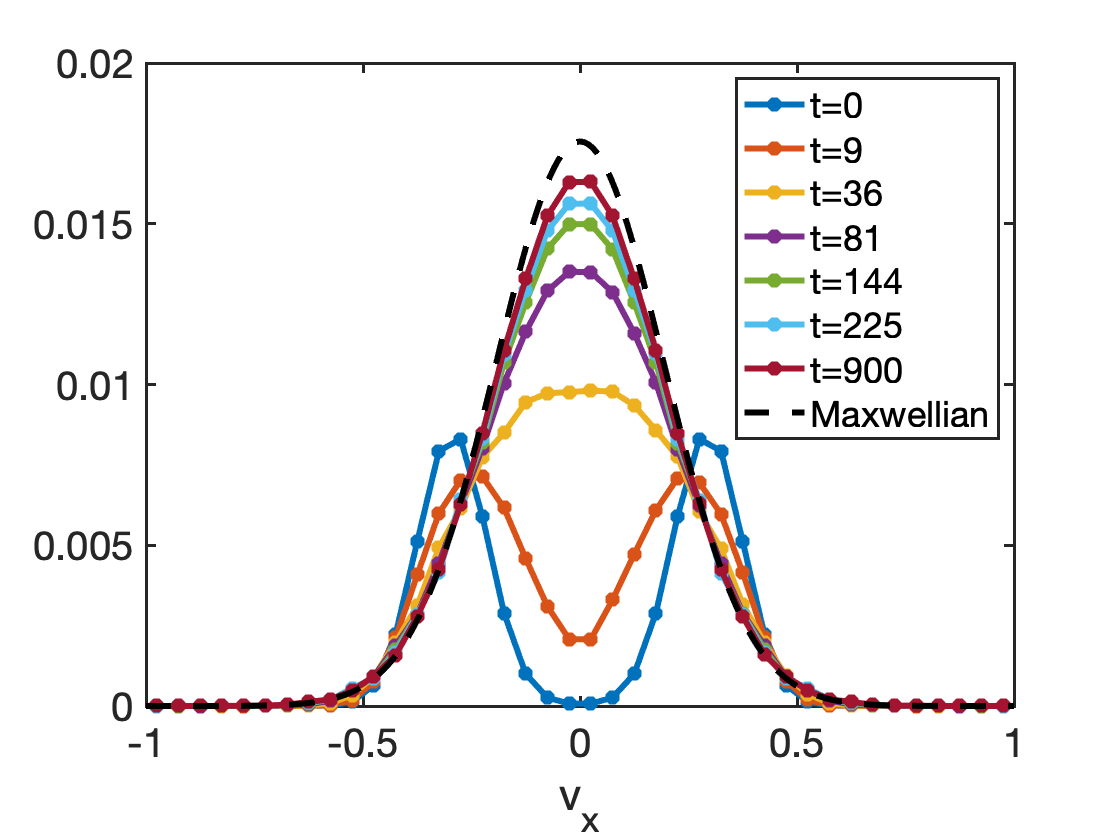}
\includegraphics[width=4.5cm,height=4cm]{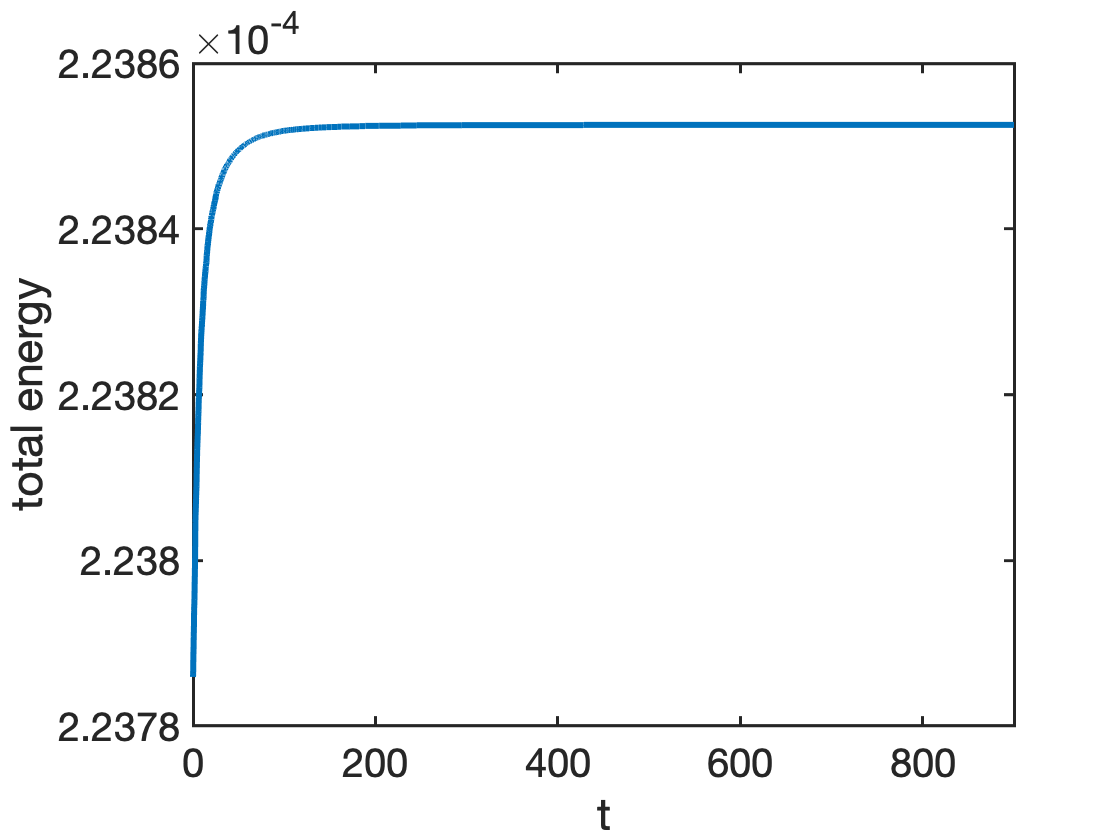}
\includegraphics[width=4.5cm,height=4cm]{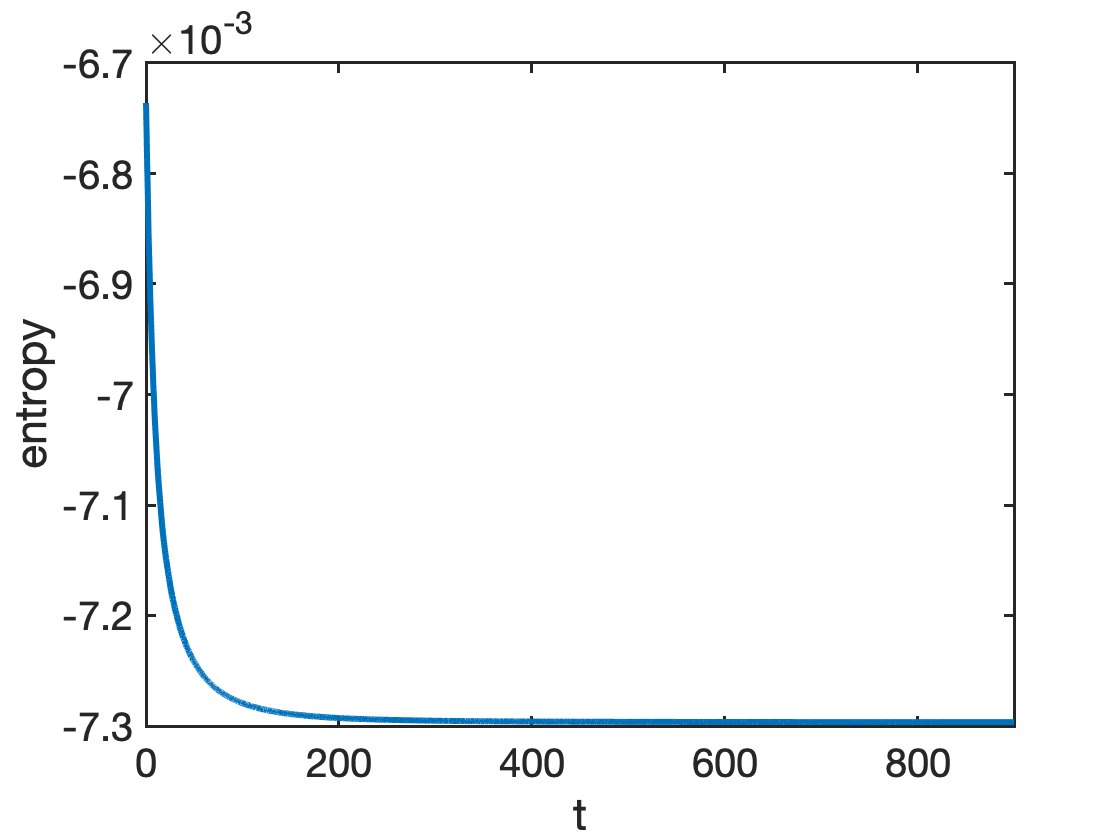}
\caption{Results by Alg. \ref{algo:RBM I} when $n_o=40$. Left: Cross section $f(v_x,0,0)$ of the distribution function at different times. The real equilibrium state is shown in the black dashed line. Middle: Time evolution of the total energy. Right: Time evolution of the entropy.}
\label{fig: slices_t_eg4}
\end{figure}

\section{Conclusion}
\label{sec: conclusion}

In this paper we introduced a random batch implementation of the particle methods for the homogeneous Landau equation proposed in \cite{CarrilloHuWangWu20}. 
For the collision term, at each time, we randomly group the $N$-particles into small batches and each particle collide only with particles in the same batch. We also utilize the rapid decay property of the mollifier, hence the overall computational cost of our algorithm is $O(N)$, instead of $O(N^2)$.
The conservation and entropy decay properties of these methods are also proved 
and numerical experiments verify the desired performance and theoretical results. 

We note that it might be a promising way to efficiently solve the Fokker-Planck-Landau equation \eqref{eqn: FPL} integrating the particle-in-cell method \cite{TskhakayaMatyashSchneiderTaccogna07} for the Vlasov equation and our random batch particle methods. This is left for our future study.

\section*{Acknowledgement}
JAC was supported by the Advanced Grant Nonlocal-CPD (Nonlocal PDEs for Complex Particle Dynamics: Phase Transitions, Patterns and Synchronization) of the European Research Council Executive Agency (ERC) under the European Union's Horizon 2020 research and innovation programme (grant agreement No. 883363).
S. Jin's research was partly supported by the NSFC grant No.12031013.

\bibliographystyle{plain}
\bibliography{rsRef.bib}

\end{document}